\documentclass{amsart}
\usepackage{graphicx} 


\newcommand{\gopr}{\gnecessary}

\DeclareSymbolFont{extraup}{U}{zavm}{m}{n}
\DeclareMathSymbol{\vardiamond}{\mathalpha}{extraup}{87}
 \DeclareSymbolFont{symbolsC}{U}{txsyc}{m}{n}
\DeclareMathSymbol{\strictif}{\mathrel}{symbolsC}{74}
\DeclareMathSymbol{\strictfi}{\mathrel}{symbolsC}{75}
\DeclareMathSymbol{\strictiff}{\mathrel}{symbolsC}{76}
\newcommand{\tto}{\strictif}

\renewcommand{\descriptionlabel}[1]%
{\hspace{\labelsep}\emph{#1:}}

\usepackage{amssymb}
\usepackage{latexsym}
\usepackage{amsmath}
\usepackage{enumerate}
\usepackage{amsthm}
\usepackage{wasysym}
\usepackage{stmaryrd}
\usepackage{mathrsfs}
\usepackage{pifont}
\usepackage{tikz}
\usepackage[f]{esvect}
\usepackage[normalem]{ulem}

\newcommand{\qee} {\hspace*{2mm}\hfill \ding{109}}

\renewcommand{\iff}{\leftrightarrow}
\renewcommand{\leq}{\leqslant}
\renewcommand{\geq}{\geqslant}
\renewcommand{\preceq}{\preccurlyeq}

\renewcommand{\phi}{\varphi}

\renewcommand{\Theta}{\varTheta}
\renewcommand{\Phi}{\varPhi}
\renewcommand{\Psi}{\varPsi}
\renewcommand{\Xi}{\varXi}
\renewcommand{\Omega}{\varOmega}
\renewcommand{\Gamma}{\varGamma}

\newtheorem{theorem}{Theorem}[section]
\newtheorem{define}[theorem]{Definition}

\newtheorem{exa}[theorem]{Example}

\newtheorem{exerc}[theorem]{Exercise}

\newtheorem{conj}[theorem]{Conjecture}

\newtheorem{ques}[theorem]{Open Question}
\newenvironment{question}{\begin{ques} \rm}{\qee\end{ques}}
\newtheorem{lem}[theorem]{Lemma}

\newtheorem{cor}[theorem]{Corollary}

\newtheorem{rem}[theorem]{Remark}
\newenvironment{remark}{\begin{rem} \rm}{\qee\end{rem}}
\newtheorem{cl}[theorem]{Claim}
\newenvironment{claim}{\begin{cl} \rm}{\qee\end{cl}}

\definecolor{uuxgreen}{cmyk}{1,0,0.75,0}
\definecolor{uuxred}{cmyk}{0.2,1,0.9,0.1}
\definecolor{uuyblue}  {cmyk}{0.9,0.55,0,0}
\definecolor{uuxblue}  {cmyk}{0.9,0.55,0,0}

\DeclareMathOperator{\possible}{\text{\tikz[scale=.6ex/1cm,baseline=-.6ex,rotate=45,line width=.1ex]{
                            \draw (-1,-1) rectangle (1,1);}}}
\DeclareMathOperator{\necessary}{\text{\tikz[scale=.6ex/1cm,baseline=-.6ex,line width=.1ex]{
                            \draw (-1,-1) rectangle (1,1);}}}

 \DeclareMathOperator{\gnecessary}{\text{\tikz[scale=.6ex/1cm,baseline=-.6ex,line width=.1ex]{
                            \draw[gray, fill = gray, fill opacity = .90] (-1,-1) rectangle (1,1);}}}

\newcommand{\gnum}[1]{{\ulcorner #1 \urcorner}}
\newcommand{\mc}[1]{\mathcal #1}
\newcommand{\num}[1]{{\underline {#1}}}
\newcommand{\mf}[1]{{\mathfrak {#1}}}
\newcommand{\verz}[1]{\{ #1 \}}
\newcommand{\tupel}[1]{{\langle #1 \rangle}}
\newcommand{\apr}{{\vartriangle}}
\newcommand{\aco}{{\triangledown}}
\newcommand{\opr}{\necessary}
\newcommand{\oco}{\possible}

\newcommand{\bleq}{\mathbin{\leq}}
\newcommand{\bgeq}{\mathbin{\geq}}
\newcommand{\qedright}{\belowdisplayskip=-12pt}

\newcommand{\PA}{{\sf PA}}
\newcommand{\EA}{\mathsf{EA}}
\newcommand{\PR}{\mathrm{Pr}}

\newcommand{\Bews}{\mathrm{Proof}^\ast}
\newcommand{\Prov}{\mathrm{Prov}}
\newcommand{\Proof}{\mathrm{Proof}}
\newcommand{\Con}{\mathrm{Con}}
\newcommand{\Ass}{\mathrm{Ass}}
\newcommand{\True}{\mathrm{True}}
\newcommand{\Sent}{\mathrm{Sent}}
\newcommand{\Fml}{\mathrm{Fml}}
\newcommand{\Bell}{\mathrm{Bell}}

\newcommand{\strind}{Strong Independence}
\newcommand{\strynd}{strongly independent}
\newcommand{\eac}{\ensuremath{{\sf EA}+{\mathrm B}\Sigma_1}}
\newcommand{\grullet}{\textcolor{gray}{$\bullet$}}

\title{Extensional Independence}

\author{Taishi Kurahashi}
\address{Graduate School of System Informatics, Kobe University, Japan.}
\email{kurahashi@people.kobe-u.ac.jp}
\author{Albert Visser}
 \address{Philosophy, Faculty of Humanities,
                Utrecht University,
               Janskerkhof 13,
                3512BL~~Utrecht, The Netherlands}
\email{a.visser@uu.nl}
\date{\today}

\keywords{Arithmetic, Incompleteness, Extensionality}

\subjclass[2020]{03F30, 
03F40 
}

\thanks{The first author is supported by JSPS KAKENHI Grant Number JP23K03200.}

\begin{document}

\begin{abstract}
Joel Hamkins asks whether there is a $\Pi^0_1$-formula $\rho(x)$ such that
$\rho(\gnum \phi)$ is independent over ${\sf PA}+\phi$, if this theory is consistent, where
this construction is extensional in $\phi$ with respect to {\sf PA}-provable equivalence. 
We show that there can be no such extensional Rosser formula of any complexity.

We give a positive answer to Hamkins' question for the case where we replace
Extensionality by a weaker demand that we call \emph{Consistent
Extensionality}. We also prove that we can demand the negation of
$\rho$ to be $\Pi^0_1$-conservative, if we ask for the still weaker
\emph{Conditional Extensionality}.

We show that an intensional version of the result for 
Conditional Extensionality
cannot work.
\end{abstract}

\maketitle

\section{Introduction}

Rosser's version of the First Incompleteness Theorem tells us far more than the bare fact that Peano Arithmetic $\PA$ is incomplete.
It tells us that \PA\ is \emph{effectively essentially incomplete} with $\Pi_1$ witnesses. This means that from an index of a consistent 
computably enumerable (c.e.) extension $U$ of \PA\ we can effectively find a $\Pi_1$-sentence $\rho$ that is independent of $U$, that is, both 
$U + \rho$ and $U + \neg\, \rho$ are consistent. 
We see that we have the following components of the result.
\begin{description}
\item[Essentiality] Not just the base theory is incomplete but all extensions from a suitable given class.
\item[Effectivity] We can find the witnesses of independence effectively from an index of the extending theory.
\item[Restriction] The witnesses are in a certain prescribed class. 
\end{description}

The Rosser Theorem seems better than the original First Incompleteness Theorem:
for G\"odel's theorem to work the extensions need to be $\Sigma_1$-sound. However, 
in one important respect, the Rosser version falls short of the
original result: \emph{extensionality}. If we restrict ourselves to $\Sigma_1$-sound extensions of \PA\ with a sentence
$\phi$, we can view the mapping $\Phi$ from $\phi$ to an independent sentence of $\PA+\phi$ as an operation on the
Lindenbaum algebra of \PA. This means that $\PA \vdash \phi \iff \psi$ implies $\PA\vdash\Phi(\phi) \iff \Phi(\psi)$. 
In other words, the operator is extensional.
Since the algebraic perspective constitutes a fundamental way of looking at a theory, this extra property is of intrinsic interest.

We consider these notions in a bit more detail. We restrict our attention to finite extensions.
We start with essentiality. 
We say that a theory $T$ is \emph{f-essentially incomplete} if, for every sentence $\varphi$, whenever $T+\varphi$ is consistent, it is incomplete (cf.~\cite{viss:pour24}). 
Here f stands for `finite'.
This notion admits a natural algebraic characterisation: $T$ is f-essentially incomplete if and only if its Lindenbaum algebra has no atoms. 
We note that this result holds also for theories that are not computably enumerable, however our default will be to consider c.e.~theories.
Since all countable atomless Boolean algebras are isomorphic (cf.~\cite[Chapter 16]{giva:intro09}), 
the Lindenbaum algebras of any two consistent c.e.~theories that are f-essentially incomplete are isomorphic. 
Also, such Boolean algebras are dense. 

As in the case of essential incompleteness, f-essential incompleteness has effective variants. 
There are two ways to formulate effectiveness. 
The first formulation is intensional.
Let $\mathsf{W}_i$ denote the c.e.\ set with index $i$.
We say that $T$ is \emph{effectively if-essentially incomplete} if there exists
a computable partial function $f$ such that, for any index $i$, if $\mathsf{W}_i$
is a consistent finite extension of $T$, then the value $f(i)$ is defined and is
a sentence that is independent of $\mathsf{W}_i$.
The second formulation is extensional.
We say that $T$ is \emph{effectively ef-essentially incomplete} if there exists
a computable partial function $f$ such that, for any sentence $\varphi$, if
$T+\varphi$ is consistent, then the value $f(\varphi)$ is defined and is
independent of $T+\varphi$.
For a detailed analysis of these effective variants, see~\cite{viss:pour24}.

These two effective notions are closely related.
Pour-El \cite{pour:effe68} showed that effective if-essential incompleteness coincides with
effective essential incompleteness. In the same paper, Pour-El also showed that effective essential incompleteness
coincides with effective inseparability. By a result of Pour-El and Kripke \cite{pour:dedu67}, all effectively inseparable
theories are recursively boolean isomorphic. It follows that these theories are precisely the equivalence class modulo recursive boolean isomorphism of, say, {\sf R}.
This equivalence class contains all foundational theories like ${\sf PA}^-$, ${\sf S}^1_2$, {\sf EA}, $\mathrm I\Sigma_1$, {\sf PA},
${\sf ACA}_0$, {\sf ZF}, {\sf GB}, \dots

Effective ef-essential incompleteness does not, in general, imply
effective if-es\-sen\-tial incompleteness.
Indeed, there exist decidable theories that are effectively ef-es\-sen\-tially
incomplete but not essentially incomplete \cite[p.~600]{jone:effe70}, \cite{murw:noeu24}, \cite[Example 4.3]{viss:pour24}.
However, the situation changes when the ef-version is formulated in terms of 
formulas.
In the context of effective ef-essential incompleteness, if the independent sentences obtained by $f$ are always 
$\Pi_n$-sentences, then by using a $\Sigma_1$-formula $\delta(x,y)$ representing $f$, 
one can define a formula \[\rho(x) : \equiv 
\forall y\, \bigl(\delta(x,y)\rightarrow \mathrm{True}_{\Pi_n}(y)\bigr),\] so that $\PA\vdash f(\varphi)\leftrightarrow \rho(\ulcorner\varphi\urcorner)$.
In this case, the fixed point theorem becomes available, and one can find a computable function witnessing effective if-essential incompleteness based on $\rho(x)$. 
That is, there exists a computable function $f$ such that, if $\mathsf{W}_i$ is a consistent finite extension of $\PA$, 
then there is a sentence $\varphi$ with $f(i) = \rho(\ulcorner\varphi\urcorner)$, and $f(i)$ is independent of $\mathsf{W}_i$ \cite{viss:pour24}.

We now turn to the question of extensionality, outlining what we will do in this paper.
For the effective versions of f-essential incompleteness introduced above, it
is natural to ask whether the operation of taking an independent sentence can
be required to respect provable equivalence.
More precisely, we ask whether independent sentences can be obtained in a way
that depends only on the Lindenbaum algebra of the underlying theory, rather
than on a particular presentation of the theory.
This question is closely connected with the question of the density of the Lindenbaum algebra of theories like $\PA$. 
Consider the Lindenbaum algebra of \PA.
Suppose that $[\varphi] < [\psi]$, that is, $\PA\vdash \varphi\rightarrow\psi$
but $\PA\nvdash \psi\rightarrow\varphi$.
Then one can effectively find a sentence $\xi_{\varphi, \psi}$ such that $[\varphi] < [\xi_{\varphi, \psi}] < [\psi]$ from $\varphi$ and $\psi$ 
by considering a sentence that is independent of the consistent theory $\PA+\psi+\neg\, \varphi$. 
However, the choice $\xi_{\varphi, \psi}$ depends on $\varphi$ and $\psi$, and hence even if $[\varphi]=[\varphi']$ and $[\psi]=[\psi']$, 
the resulting sentences $\xi_{\varphi,\psi}$ and $\xi_{\varphi',\psi'}$ need not be $\PA$-provably equivalent.
This leads to the following question.
Can independent sentences be obtained effectively in a way that reflects only
the extensional structure of the Lindenbaum algebra?
Shavrukov and Visser gave an affirmative answer to this question by proving the following result on \emph{Extensional Independence}. 

\begin{theorem}[Shavrukov and Visser \cite{shav:unif14}]\label{thm:SV_independence}
There is a $\Delta^0_2$-formula $\rho(x)$ over {\sf PA} with the following properties.
\begin{description}
\item[Independence] If ${\sf PA}+ \phi$ is consistent, then so are ${\sf PA}+ \phi+\rho(\gnum\phi)$ and\\
 ${\sf PA}+ \phi+\neg\, \rho(\gnum\phi)$.
\item[Conditional Extensionality] If ${\sf PA} \vdash \phi \iff \psi$, then $ {\sf PA}+ \phi \vdash\rho(\gnum \phi) \iff \rho(\gnum\psi)$.
\end{description}
\end{theorem}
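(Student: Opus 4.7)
The goal is to define a $\Delta^0_2$-formula $\rho$ whose value at $\gnum\phi$ is a Rosser-type sentence independent over $\mathsf{PA}+\phi$, with the added feature that the definition is sensitive only to the deductive closure of $\mathsf{PA}+\phi$. The extra complexity beyond $\Sigma^0_1$ (the classical Rosser setting) is used to replace a rigid proof-code comparison, which is tied to a particular enumeration of $\mathsf{PA}+\phi$-proofs, by a more robust comparison that survives the translation of proofs induced by a $\mathsf{PA}$-proof of $\phi \leftrightarrow \psi$.

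A natural first attempt is, via the diagonalization lemma, to take a formula $\rho(x)$ satisfying, provably in $\mathsf{PA}$,
\[
  \rho(x) \leftrightarrow \exists p\,\bigl[\mathrm{Prf}_x(p, \gnum{\neg\rho(\dot x)}) \wedge \forall q \leq p\,\neg\,\mathrm{Prf}_x(q, \gnum{\rho(\dot x)})\bigr],
\]
where $\mathrm{Prf}_x$ is the proof predicate for $\mathsf{PA}+\phi_x$. This classical $\Sigma^0_1$-Rosser sentence gives Independence immediately. Conditional Extensionality fails, however, because translating a $\mathsf{PA}+\phi$-proof into a $\mathsf{PA}+\psi$-proof (via a fixed $\mathsf{PA}$-derivation of $\phi \leftrightarrow \psi$) need not respect the linear ordering of proof codes used in the inner $\forall q \leq p$. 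The route I would pursue is to refine the above to a genuinely $\Delta^0_2$ witness comparison, combining the $\Sigma^0_1$-Rosser witness with a $\Pi^0_1$ side condition that effectively replaces proof-code ordering by a provability-based ordering invariant under such translations. Working inside $\mathsf{PA}+\phi$, one then uses the available $\mathsf{PA}$-proof of $\phi \leftrightarrow \psi$ to verify internally that the refined witness comparisons for $\rho(\gnum\phi)$ and $\rho(\gnum\psi)$ coincide, yielding $\mathsf{PA}+\phi \vdash \rho(\gnum\phi) \leftrightarrow \rho(\gnum\psi)$.

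\textbf{Main obstacle.} The chief difficulty is finding a $\Delta^0_2$ witness comparison that is simultaneously \emph{tight enough} that $\Sigma^0_1$-completeness forces a Rosser-style contradiction in the Independence proof, yet \emph{loose enough} to be preserved by the proof translation induced by $\phi \leftrightarrow \psi$ inside $\mathsf{PA}+\phi$. A ``canonical representative'' approach---setting $c(x) := \min\{z : \mathsf{PA} \vdash \phi_x \leftrightarrow \phi_z\}$ and taking $\rho(x)$ to be a classical Rosser sentence of $\mathsf{PA}+\phi_{c(x)}$---stumbles on the observation that $\mathsf{PA}+\phi$ in general cannot verify the $\Pi^0_1$-claim $c(\gnum\phi) = c(\gnum\psi)$, so Conditional Extensionality fails at exactly the step where, by the no-go result of Section~\ref{nerf}, full Extensionality would have to fail. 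The successful construction must therefore weave the extensionality and independence requirements together inside the fixed-point equation itself, rather than handling them sequentially.
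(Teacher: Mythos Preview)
Your proposal is not a proof: it is a plan that correctly diagnoses why the naive $\Sigma^0_1$-Rosser formula and the ``canonical representative'' trick fail, but it stops at the obstacle without producing a construction. Saying that one must ``weave the extensionality and independence requirements together inside the fixed-point equation'' is a restatement of the problem, not a solution. In particular, you never specify what the $\Delta^0_2$ witness comparison is, nor do you verify either Independence or Conditional Extensionality for any concrete $\rho$.

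More importantly, the approach you are pursuing---tweaking a Rosser-style witness comparison---is not the route that actually works here. The paper does not prove this theorem in full (it is quoted from \cite{shav:unif14}), but it explains the mechanism and, in Section~\ref{cdorf}, carries out a strengthened version. The Shavrukov--Visser formula is \emph{not} a Rosser sentence at all: it is a (fixed-point-free) G\"odel sentence for a \emph{Feferman provability predicate} $\apr_\phi$ built relative to $\mathsf{PA}+\phi$. Concretely, one takes $\rho(\gnum\phi)$ to be the consistency statement $\aco_\phi\top$ for a predicate of the form ``provable in $\mathsf{PA}+\phi$ from axioms that are $\phi$-small'', where $\phi$-smallness is a $\Sigma^0_1$ cut defined using a uniform bounded provability notion $\gopr_{\phi,(x)}$. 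Independence then follows because $\apr_\phi$ satisfies the L\"ob conditions over $\mathsf{PA}+\phi$ together with an absorption law $\opr_\phi\apr_\phi\bot \to \opr_\phi\bot$; Conditional Extensionality follows because the uniform smallness notion is designed so that $\mathsf{PA}+\phi \vdash \forall x\,(\mf S_\phi(x) \leftrightarrow \mf S_\psi(x))$ whenever $\mathsf{PA}\vdash \phi\leftrightarrow\psi$ (Theorems~\ref{oppervlakkigesmurf} and~\ref{extensionalitysmurf}). The point is that extensionality is obtained not by massaging proof-code order, but by replacing the hard cut-off of ordinary bounded provability with a soft, provability-invariant one; this is what buys you the extra $\Delta^0_2$ level and simultaneously makes the predicate insensitive to the particular presentation of $\phi$.
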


By letting $f_\rho(\varphi) : = \rho(\ulcorner\varphi\urcorner)\land\varphi$,
we obtain an extensional version of effective ef-essential incompleteness.

\begin{cor}
There is a computable function $f_\rho$ with the following properties.
\begin{description}
\item[Independence] If ${\sf PA}+ \phi$ is consistent, then so are the theories ${\sf PA}+ \phi+ f_\rho(\phi)$ and ${\sf PA}+ \phi+\neg\, f_{\rho}(\phi)$.
\item[Extensionality] If ${\sf PA} \vdash \phi \iff \psi$, then $ {\sf PA} \vdash f_{\rho}(\phi) \iff f_{\rho}(\psi)$.
\end{description}
\end{cor}
\noindent In other words, Conditional Extensionality for $\rho$ entails Extensionality for $f_\rho$.
Using this construction, the density of the Lindenbaum algebra of $\PA$ can be
witnessed in an extensional manner.
Let $[\varphi] \leq [\psi]$ denote $\PA \vdash \varphi \to \psi$. 
Also, $[\varphi] < [\psi]$ and $[\varphi] = [\psi]$ denote ($[\varphi] \leq [\psi]$ and $[\psi] \not \leq [\varphi]$) and ($[\varphi] \leq [\psi]$ and $[\psi] \leq [\varphi]$), respectively. 

\begin{theorem}[Shavrukov and Visser \cite{shav:unif14}]\label{thm:SV_density}
There is a computable function $g_\rho$ with the following properties. 
\begin{description}
\item[Density] If $[\varphi] < [\psi]$, then $[\varphi] < [g_\rho(\varphi,\psi)] < [\psi]$.
\item[Extensionality] If $[\varphi]=[\varphi']$ and $[\psi]=[\psi']$, then $[g_\rho(\varphi,\psi)] = [g_\rho(\varphi',\psi')]$.
\end{description}
\end{theorem}

Inspired by the result of Shavrukov and Visser, in his paper \cite{hamk:nonl22}, Joel Hamkins asks the following natural question (Question 28).\footnote{For notational consistency with the rest of this paper we changed the variable-names in the formulation of the question.}
Is there a $\Pi^0_1$-formula $\rho(x)$ with the following properties?
\begin{description}
\item[Independence]  If ${\sf PA}+ \phi$ is consistent, then so are ${\sf PA}+ \phi+\rho(\gnum\phi)$ and\\
 ${\sf PA}+ \phi+\neg\, \rho(\gnum\phi)$.
\item[Extensionality] If ${\sf PA} \vdash \phi \iff \psi$, then  $ {\sf PA} \vdash\rho(\gnum \phi) \iff \rho(\gnum\psi)$.
\end{description}

\noindent
We call a formula that satisfies both Independence and Extensionality \emph{an extensional Rosser formula}.
We allow an extensional Rosser formula to be of any complexity.

Relative to Shavrukov {\&} Visser's result, we see that Hamkins is asking for two improvements:
(i) bring down $\Delta^0_2$ to $\Pi^0_1$ and (ii) improve Conditional Extensionality to
Extensionality. 
Hamkins' question can be understood as asking the following.
Can one effectively assign a $\Pi_1$-sentence independent of $U$ to $U$ in a way that does not depend on the presentation of $U$?
Equivalently, is the effective operation of taking an independent sentence for $U$ an extensional property of $U$, or is it necessarily intensional?

We answer Hamkins' question negatively in Section~\ref{nerf}. 
There simply is no extensional Rosser formula (of any complexity), not just over {\sf PA},
but over a wide range of theories (Theorem \ref{grotesmurf}).  
In a sense, this result shows that Hamkins' question was
not quite the right question.

In Section~\ref{csorf}, we will show that, if we weaken Extensionality to \emph{Consistent Extensionality}, Hamkins' question gets a positive
answer (Theorem \ref{con_ext}). 
Specifically, we construct a $\Pi^0_1$-formula $\rho(x)$ such that we have the following.
\begin{description}
\item[Independence]  If ${\sf PA}+ \phi$ is consistent, then so are ${\sf PA}+ \phi+\rho(\gnum\phi)$ and\\
 ${\sf PA}+ \phi+\neg\, \rho(\gnum\phi)$.
\item[Consistent Extensionality] If ${\sf PA}+ \phi$ is consistent and 
${\sf PA} \vdash \phi \iff \psi$, then  $ {\sf PA} \vdash\rho(\gnum \phi) \iff \rho(\gnum\psi)$.
\end{description}
This shows that, although extensionality in the sense asked by Hamkins is impossible, effective ef-essential incompleteness with $\Pi^0_1$-formulas can be witnessed by an extensional way. 
In fact our solution, does not just work for {\sf PA}, but for all c.e.~extensions of Elementary
Arithmetic, {\sf EA}, also known as $\mathrm I\Delta_0+{\sf Exp}$ (cf.~\cite{haje:meta91})

In Section~\ref{cdorf}, we discuss a third form of extensionality.
If we weaken \emph{Consistent Extensionality} to \emph{Conditional Extensionality} as in Shavrukov and Visser's Theorem \ref{thm:SV_independence}, we can strengthen  
\emph{Independence} to, say, \emph{\strind} (Theorem \ref{maintheorem}). 
Specifically, we can show the following. There is a $\Pi^0_1$-formula $\rho(x)$ satisfying the following properties.
\begin{description}
\item[\strind] If ${\sf PA}+ \phi$ is consistent, then ${\sf PA}+\phi +\rho(\gnum\phi)$ is 
consistent and 
${\sf PA}+\phi+\neg\,\rho(\gnum \phi)$ is
$\Pi^0_1$-conservative over ${\sf PA}+\phi$.
\item[Conditional Extensionality] If ${\sf PA} \vdash \phi \iff \psi$, then 
$ {\sf PA}+ \phi \vdash\rho(\gnum \phi) \iff \rho(\gnum\psi)$.
\end{description}
As a consequence of this theorem, we obtain a computable function $f(\cdot, \cdot)$ satisfying the following conditions, 
which substantially strengthen Theorem~\ref{thm:SV_density} on extensional density of the Lindenbaum algebra (Theorem \ref{Lindenbaum}): 
\begin{description}
	\item[Strong Density] If $[\varphi] < [\psi]$ and $\varphi, \psi \in \Gamma \supseteq \Pi_1$, then we 
	have $f(\varphi, \psi) \in \Gamma$, $[\varphi] < [f(\varphi, \psi)] < [\psi]$, and for any $\Sigma_1$ sentence $\sigma$, if $[\sigma] \leq [f(\varphi, \psi)]$, then $[\sigma] \leq [\varphi]$. 
	\item[Extensionality] If $[\varphi] = [\varphi']$ and $[\psi] = [\psi']$, then $[f(\varphi, \psi)] = [f(\varphi', \psi')]$.  
\end{description}

Finally, in Section~\ref{inerf}, we prove a further negative result.
We show that, in the intensional case, we have a much stronger negative result, to wit, 
    there is \emph{no} computable function $\Phi$ satisfying the following condition (Corollary \ref{if_cor2}). 
    For any natural numbers $i$ and $j$, if $\mathsf{W}_i$ and $\mathsf{W}_j$ are deductively equivalent 
        consistent finite extensions of $U$, then $\Phi(i)$ and $\Phi(j)$ converge to sentences, 
        \begin{description}
        \item[Weak Independence]
        $U \nvdash \Phi(i)$ and $\mathsf{W}_i \nvdash \neg\, \Phi(i)$;
         \item[Conditional Extensionality]        
         $\mathsf{W}_i \vdash \Phi(i) \leftrightarrow \Phi(j)$. 
\end{description}
Thus, in this sense, effective if-essential incompleteness is inherently intensional, that is, any effective choice of an independent sentence of a finite extension must essentially depend on the particular c.e.~presentation of the finite extension.

\subsection{History of the Paper}
The present paper supersedes Albert Visser’s preprint \emph{On a Question of Hamkins'}, incorporating and extending its content.

\section{Provability predicates and related background}
In this section, we briefly review provability predicates, Rosser provability predicates, witness comparison, partial conservativity, and Feferman-style provability predicates. 
We also evaluate Shavrukov and Visser's formula in Theorem \ref{thm:SV_independence}.

For each consistent c.e.~extension $U$ of $\EA$, we find an elementary formula that is a proof predicate $\Proof_U(x, y)$ saying that $y$ is a $U$-proof of $x$, that is, $U \vdash \varphi$ if and only if $\Proof_U(\gnum{\varphi}, \num p)$ for some natural number $p$. 
The existence of such a formula is guaranteed by Craig's trick. 
Let $\Prov_U(x)$ be the $\Sigma^0_1$-formula $\exists y\, \Proof_U(x, y)$ which is a provability predicate of $U$. 
Given such a proof predicate $\Proof_U(x, y)$, one can define a Rosser provability predicate $\Prov_U^{\mathrm{R}}(x)$ by 
\[
    \exists y\, (\Proof_U(x, y) \land \forall z \leq y\, \neg\, \Proof_U(\neg \, x, z)). 
\]
A $\Pi^0_1$ fixed point of the negation of such a predicate is called a $\Pi^0_1$-Rosser sentence of $U$, which is independent of $U$.
Different choices of proof predicates provide different Rosser provability predicates.

  It has been shown in previous studies that the properties of Rosser sentences for a consistent theory $U$ depend on the way how such a Rosser provability predicate is constructed. 
  Guaspari and Solovay \cite{guas:ross79} pioneered such work, and in particular they proved that the uniqueness of Rosser sentences of $U$ modulo $U$-provability depends on the construction of Rosser provability predicates. 
  Various questions can be asked about $U$-independent sentences and, thus,
  the work of Guaspari and Solovay has been followed by the development of various methods for constructing 
  provability predicates with properties of various kinds. 
  In Section \ref{csorf}, we prove Theorem \ref{con_ext} by constructing a Rosser provability predicate, using a
  method developed by Guaspari and Solovay, so that $\rho(\gnum{\phi})$ is a Rosser sentence of $\PA + \varphi$.   

    Guaspari and Solovay also developed the systematic analysis of witness comparison, which plays a central role in the construction of Rosser provability predicates. 
    For formulas $\varphi \equiv \exists x \,\alpha (x)$ and $\psi \equiv \exists x \,\beta(x)$, we define the formulas $\varphi < \psi$ and $\varphi \leq \psi$ as follows: 
\begin{itemize}
\item 
$\varphi < \psi : \equiv \exists x \, (\alpha(x) \wedge \forall y \leq x \, \neg \,\beta(y))$, 
\item 
$\varphi \leq \psi : \equiv \exists x \, (\alpha(x) \wedge \forall y < x \, \neg\, \beta(y))$.
\end{itemize}
A $\Pi^0_1$-Rosser sentence $\rho$ of $U$ is given by $\EA \vdash \rho \leftrightarrow \neg\, (\Prov_U(\gnum{\rho}) < \Prov_U(\gnum{\neg\, \rho}))$. 
We will freely use the following property of witness comparison formulas without referring to it: 

\begin{lem}[cf.~Lindstr\"om {\cite[Lemma 1.3]{lind:aspe03}}]\label{lem:witness}
For any formulas $\varphi \equiv \exists x \, \alpha(x)$ and $\psi \equiv \exists x \, \beta(x)$, where $\alpha(x)$ and $\beta(x)$ are elementary formulas, we have the following clauses: 
\begin{enumerate}[1.]
\item 
$\EA \vdash \varphi < \psi \to \varphi \leq \psi$.
\item 
$\EA \vdash \varphi \vee \psi \to (\varphi < \psi) \vee (\psi \leq \varphi)$.
\item 
$\EA \vdash \neg \, \bigl((\varphi < \psi) \wedge (\psi \leq \varphi) \bigr)$.
\item 
$\EA \vdash \varphi \wedge \neg \, \psi \to \varphi < \psi$.
\end{enumerate}
\end{lem}
 
  The systematic study of sentences with partial conservativity was initiated by Guaspari \cite{guas:part79}. 
  For a class $\Gamma$ of formulas, we say that a sentence $\varphi$ is $\Gamma$-conservative over a theory $U$ if $U$ proves a $\Gamma$ sentence $\gamma$ whenever $U + \varphi$ proves $\gamma$.   
  Guaspari proved that for each consistent c.e.~extension $U$ of $\PA$ and for each $n \geq 1$, there exists a $\Pi^0_n$ (resp.~$\Sigma^0_n$)-sentence $\gamma$ such that $U + \gamma$ is consistent and $U + \neg \, \gamma$ is $\Pi^0_n$ (resp.~$\Sigma^0_n$)-conservative over $U$. 
  For each $\Pi^0_1$-Rosser sentence $\xi$ for $U$, it is known that $\neg\, \xi$ is not $\Pi^0_1$-conservative over $U$ (see \cite[Exercise 5.1]{lind:aspe03}), and so Guaspari's sentences cannot be Rosser sentences. 
  So, our method proving Theorem \ref{maintheorem} in Section \ref{cdorf} is substantially different from that of Section \ref{csorf}. 
 
 The formula employed by Shavrukov {\&} Visser in Theorem \ref{thm:SV_independence} is a meaningful formula. 
 It generates Smory\'nski's  Rosser sentences from \cite{smor:self89}.
 Moreover, it generates  fixed-point-free sentences that satisfy (uniquely modulo provable equivalence) the 
 G\"odel equation for a certain Feferman-stye provability predicate. 
 See  \cite{smor:self89} and \cite{shav:smar94}. 
 
 We remind the reader that a \emph{Fefermanian provability predicate} is a provability predicate that is constructed from
 a fixed formalisation of \emph{provability-from-assumptions} in combination with a representation of the axiom set. The idea
 is that the only \emph{funny business} can happen in the representation of the axiom set.

 We also have the notion of a \emph{Feferman-style provability predicate}
 for a theory $U$. Such a predicate  is a Fefermanian provability predicate, where the axiom set is built from an elementary representation of the axiom set plus the stipulation that
 we may only use an axioms if the set of axioms smaller or equal to its G\"odel number is consistent. See \cite{fefe:arit60}.\footnote{Of course, there are many variations possible on 
 our definition of the Feferman-style predicate. For example, we can demand that we may use an axiom if it and the axioms enumerated before it are consistent.}
 Thus, the generated sentences, while not being consistency statements, do share two important properties with consistency
 statements: being explicit, i.e., fixed-point-free, and being unique solutions of a G\"odel equation. 
 Since $\PA$ is essentially reflexive, that is, every consistent extension of $\PA$ proves the consistency of every finite subtheory of it, 
 a Feferman-style provability predicate is actually a provability predicate. 

 We have the following insight. 
\begin{theorem}
Let $U$ be any c.e.~extension of {\sf PA}. Consider Feferman-style provability for any enumeration of $U$.
Let $\gamma$ be a G\"odel sentence for Feferman provability. Then, $\gamma$ and $\neg\, \gamma$ are both
$\Pi^0_1$-conservative over $U$.
\end{theorem}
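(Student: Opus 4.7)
The plan is to reduce both $\Pi^0_1$-conservativity claims to interpretability of $U+\gamma$ and $U+\neg\gamma$ in $U$, via the standard conservativity--interpretability correspondence. The key asymmetry I would exploit is Feferman's consistency theorem: $U \vdash \neg \Prov^F(\gnum{\bot})$ for the Feferman-modified numeration $\tau^*$ associated with $\tau$, so $U$ internally regards the Feferman sub-theory as a consistent object, in sharp contrast with G\"odel's second incompleteness for ordinary provability. This provably-consistent internal sub-theory will serve as the target of both interpretations.

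For $U+\gamma$: I would use Feferman's canonical internal interpretation $j : U \to U$ realizing the Feferman-consistent sub-theory. Because $U$ proves this sub-theory consistent, the usual G\"odel argument can be run internally to show that the sub-theory does not prove its own G\"odel sentence, i.e., $U \vdash j(\gamma)$. Hence $j$ interprets $U+\gamma$ inside $U$, yielding the first $\Pi^0_1$-conservativity claim.

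For $U+\neg\gamma$: I would construct a dual interpretation $j'$ from a modified numeration $\tau'$ of $U$, engineered by a Rosser-style witness-comparison between proofs of $\gamma$ and proofs of contradiction, so that internally $j'$ realizes a sub-theory that carries a (non-standard) Feferman-proof of $\gamma$. Then $U \vdash j'(\Prov^F(\gnum{\gamma}))$, i.e., $U \vdash j'(\neg\gamma)$, yielding the desired interpretation of $U+\neg\gamma$ in $U$.

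The main obstacle will be the construction of $j'$: one needs $\tau'$ to be a numeration of $U$ itself (not of a strict extension), yet its Feferman-predicate must internally witness $\gamma$. This demands a careful witness-comparison argument in the spirit of Guaspari--Solovay and Shavrukov--Visser, and the verification that $j'$ preserves enough $\Sigma^0_1$-truth for conservativity to follow from interpretability is where most of the technical subtlety lies.
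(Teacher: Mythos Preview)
Your high-level strategy---reduce to interpretability via the Orey--H\'ajek correspondence---is a legitimate alternative and the paper explicitly notes that a proof ``in an appropriate interpretability logic'' exists. However, your concrete execution has a genuine gap in the $U+\gamma$ direction, and the $U+\neg\gamma$ direction is vastly overengineered compared to what is actually needed.

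For $U+\gamma$, your claim that $U \vdash j(\gamma)$ because ``the usual G\"odel argument can be run internally'' does not go through. The formalised first incompleteness theorem for a predicate $\apr$ reads $\neg\,\apr\bot \to \neg\,\apr\gamma$, and its proof uses the derivability condition $\apr\gamma \to \apr\apr\gamma$. But Feferman provability is \emph{not} $\Sigma^0_1$ and does \emph{not} satisfy this condition; indeed, if $U \vdash \apr\gamma \to \apr\bot$ held, then from $U \vdash \neg\,\apr\bot$ we would get $U \vdash \neg\,\apr\gamma$, i.e.\ $U \vdash \gamma$, contradicting the independence of $\gamma$. So the very feature that makes Feferman provability escape G\"odel~II (failure of the fourth L\"ob condition) blocks your ``internal G\"odel~I'' step. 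The Feferman/Henkin interpretation $j$ certainly interprets $U$ in $U$, but you have given no valid reason why $U \vdash \gamma^j$.

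For $U+\neg\gamma$, your proposed Rosser-style modified numeration $\tau'$ with witness comparisons is heavy machinery for what is in fact the \emph{easier} direction. Note that $\neg\,\gamma$ is $U$-equivalent to $\apr\gamma$, and the paper disposes of this case in three lines using only $\Sigma^0_1$-completeness of $\apr$ and the fixed-point equation.

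By contrast, the paper's direct argument avoids interpretations entirely. Writing $\sigma$ for a $\Sigma^0_1$-equivalent of $\neg\,\pi$: if $U \vdash \gamma \to \pi$, then $U \vdash \sigma \to \apr\gamma$, whence (via $\sigma \to \apr\sigma$ and necessitation of $\gamma\wedge\sigma \to \bot$) $U \vdash \sigma \to \apr\bot$, so $U \vdash \pi$. If $U \vdash \neg\,\gamma \to \pi$, then $U \vdash \sigma \to \gamma$, whence $U \vdash \sigma \to \apr\gamma$, i.e.\ $U \vdash \sigma \to \neg\,\gamma$, so again $U \vdash \pi$. The only tools are the K-axiom, necessitation, $\Sigma^0_1$-completeness for $\apr$, and Feferman's $\neg\,\apr\bot$---none of which require the missing fourth L\"ob condition.
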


\noindent
We give a direct proof. Since, in extensions of {\sf PA}, $\Pi^0_1$-conservativity and interpretability coincide, 
we can also give a proof in an appropriate interpretability logic. See \cite[p.~177]{viss:pean89}.
 
  \begin{proof}
      We write $\vdash$ for $U$-provability and $\apr$ for our Feferman-style provability predicate.
      Then, $\vdash \gamma \leftrightarrow \neg \, \apr \gamma$ and $\vdash \neg \, \apr \bot$. Let $\pi$ be any
      $\Pi^0_1$-sentence and let $\sigma$ be a $\Sigma^0_1$-sentence {\sf PA}-provably equivalent to $\neg\,\pi$.  

      Suppose $\vdash \gamma \to \pi$. Then, $\vdash \sigma \to \apr\gamma$. It follows that
      $\vdash \sigma \to \apr(\gamma \wedge \sigma)$ and, hence, $\vdash \sigma\to \apr(\gamma \wedge \neg\, \gamma)$.
      So, $\vdash \sigma\to \apr\bot$. We may conclude $\vdash \pi$.

      Suppose $\vdash \neg\, \gamma \to \pi$. Hence, $\vdash \sigma \to \gamma$. Since
      $\vdash \sigma \to \apr\sigma$, we find $\vdash \sigma \to \apr\gamma$. Ergo,
      $\vdash \sigma \to \neg\,\gamma$. We may conclude $\vdash \pi$.
  \end{proof}

  Thus, the sentences produced by Shavrukov {\&} Visser satisfy the following stronger kind of independence.

\begin{description}
\item[Double $\Pi^0_1$-conservativity] If ${\sf PA}+ \phi$ is consistent, then both 
${\sf PA}+ \phi+\rho(\gnum\phi)$ and ${\sf PA}+ \phi+\neg\, \rho(\gnum\phi)$ are $\Pi^0_1$-conservative over $\PA + \varphi$.
\end{description}

For a systematic study of the existence of such doubly conservative sentences, see a recent work of Kogure and Kurahashi \cite{kogu:doubly25}.
We close this section by proposing the following question. 

 \begin{question}
For each $n \geq 2$, does there exist a $\Pi^0_n$-formula $\rho(x)$ satisfying the following conditions?
\begin{description}
\item[$\Pi^0_n$-Strong Independence] If ${\sf PA}+ \phi$ is consistent, then ${\sf PA}+\phi +\rho(\gnum\phi)$ is 
consistent and ${\sf PA}+\phi+\neg \, \rho(\gnum \phi)$ is
$\Pi^0_n$-conservative over ${\sf PA}+\phi$.
\item[Conditional Extensionality] If ${\sf PA} \vdash \phi \iff \psi$, then 
$ {\sf PA}+ \phi \vdash\rho(\gnum \phi) \iff \rho(\gnum\psi)$.
\end{description}

Furthermore, for $n \geq 1$, can we strengthen Conditional Extensionality to Consistent Extensionality?
 \end{question}

  \section{There are no extensional Rosser Formulas}\label{nerf}
  In this section, we prove that there is no formula that is both independent and
  extensional, thus answering
   Hamkins' original question in the negative.
  
  \begin{theorem}\label{grotesmurf}
  Let $U$ be any consistent theory that extends the Tarski-Mostowski-Robinson theory {\sf R}.
  Then, there are no extensional Rosser formulas over base theory $U$.
  \end{theorem}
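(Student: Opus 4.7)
The plan is to derive a contradiction by applying the diagonal lemma---available in $\mathsf{R}$, which represents all primitive recursive functions---twice, so as to pin down the single sentence $\rho(\gnum{\bot})$ in two contradictory ways. The key point is that Extensionality is a statement about the induced map on the Lindenbaum algebra of $U$: if two sentences are $U$-provably equivalent, their $\rho$-images are too. In particular, any sentence that $U$ proves is $U$-provably equivalent to $\bot$, and this lets us transport information obtained by diagonal self-reference back to the fixed reference sentence $\bot$.

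First, I would obtain by the diagonal lemma a sentence $\phi$ with $U\vdash\phi\iff\rho(\gnum\phi)$. If $U+\phi$ were consistent, Independence would yield the consistency of $U+\phi+\neg\,\rho(\gnum\phi)$, which however contradicts the fixed-point equivalence. Hence $U\vdash\neg\,\phi$, whence $U\vdash\neg\,\rho(\gnum\phi)$. Since $U\vdash\phi\iff\bot$, Extensionality gives $U\vdash\rho(\gnum\phi)\iff\rho(\gnum{\bot})$, and therefore $U\vdash\neg\,\rho(\gnum{\bot})$.

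Next, I would dualise the construction: take $\psi$ with $U\vdash\psi\iff\neg\,\rho(\gnum\psi)$. The mirror-image argument shows that if $U+\psi$ were consistent, Independence would give the consistency of $U+\psi+\rho(\gnum\psi)$, again contradicting the fixed-point equivalence; so $U\vdash\neg\,\psi$ and hence $U\vdash\rho(\gnum\psi)$. A second appeal to Extensionality, using $\psi\iff\bot$, now delivers $U\vdash\rho(\gnum{\bot})$, contradicting the previous step and the assumed consistency of $U$.

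The main conceptual obstacle, and the thing to notice, is that a single diagonal is \emph{not} enough: the first diagonal alone only forces $U\vdash\neg\,\rho(\gnum{\bot})$, which is not yet absurd. One needs to observe that the positive and negative versions of the diagonal argument funnel their conclusions into the \emph{same} fixed reference sentence $\bot$ via Extensionality, and thus produce contradictory verdicts on $\rho(\gnum{\bot})$. The only hypotheses on $U$ that this uses are the availability of the diagonal lemma and plain consistency, so the argument goes through for any consistent extension of $\mathsf{R}$ and does not care about the arithmetical complexity of $\rho$.
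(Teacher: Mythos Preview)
Your argument is correct and is essentially the paper's proof: both construct the two diagonal sentences $\phi\iff\rho(\gnum\phi)$ and $\psi\iff\neg\,\rho(\gnum\psi)$, use Independence to show that each is $U$-refutable, and then invoke Extensionality on the resulting provable equivalence to obtain a contradiction. The only cosmetic difference is that you route both conclusions through the fixed reference point $\rho(\gnum{\bot})$, whereas the paper compares $\phi$ and $\psi$ directly (since both are refutable, $U\vdash\phi\iff\psi$, hence $U\vdash\rho(\gnum\phi)\iff\rho(\gnum\psi)$, hence $U\vdash\phi\iff\neg\,\psi$); these are the same move.
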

  
  \noindent
  Note that we do not need any constraint on the complexity of the axiom set of $U$.
  \begin{proof}
  We write $\vdash$ for $U$-provability. 
  Suppose $\rho(x)$ is an extensional Rosser formula over $U$. We form the fixed points
  $\vdash \phi_0 \iff \rho(\gnum {\phi_0})$ and $\vdash \phi_1 \iff \neg\, \rho(\gnum {\phi_1})$.
  By Independence, we find $\vdash \neg\,\phi_0$ and $\vdash \neg\,\phi_1$. 
  So, $\vdash \phi_0 \iff \phi_1$. 
  By Extensionality, $\vdash \rho(\gnum{\phi_0}) \iff \rho(\gnum{\phi_1})$.
  By the Fixed Point Equations, we find $\vdash \phi_0 \iff \neg \, \phi_1$, contradicting the consistency of $U$.  
  \end{proof}
  \noindent By minor adaptations of the formulation and the proof, we find a similar result for theories that \emph{interpret}
  {\sf R}. 
  
  The above argument only uses the special case of Extensionality for inconsistent formulas.
  So, one may wonder what happens if we simply ban this case. 
  This gives an insight into why our Theorem~\ref{con_ext} holds.

\section{There is a consistently extensional Rosser Formula}\label{csorf}
In this section, we prove the existence of a $\Pi^0_1$-formula that is a consistently extensional Rosser formula $\rho(x)$. 
In the light of Theorem \ref{grotesmurf}, this seems to provide the best positive answer to Hamkins' question.
Our construction of $\rho(x)$ is based on the method developed by Guaspari and Solovay \cite{guas:ross79}. 
Let $\varphi^0$ and $\varphi^1$ denote $\varphi$ and $\neg\, \varphi$, respectively. 

We will assume monotonicity of coding, to wit
that (the code of) a proof is, verifiably, larger than the codes of the formulas
occurring in the proof and that (the code of) a formula is, verifiably, larger than the codes of its strict subformulas.

\begin{theorem}\label{con_ext}
    Let $U$ be any consistent c.e.~extension of $\EA$. 
        Then, there exists a $\Pi^0_1$-formula $\rho(x)$ satisfying the following two conditions:
    \begin{description}
        \item [Independence] If $U + \varphi$ is consistent, then so are $U + \varphi + \rho(\gnum{\varphi})$ and $U + \varphi + \neg\, \rho(\gnum{\varphi})$. 

        \item [Consistent Extensionality] If $U + \varphi$ is consistent and $U \vdash \varphi \leftrightarrow \psi$, then $\EA \vdash \rho(\gnum{\varphi}) \leftrightarrow \rho(\gnum{\psi})$. 
    \end{description}
\end{theorem}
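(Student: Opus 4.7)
The plan is to adapt the Guaspari--Solovay witness-comparison construction of a Rosser provability predicate to the parametrized setting, with each formula $\varphi$ treated as a parameter via its G\"odel number $\gnum \varphi$.

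First, using the fixed-point lemma, I would produce a $\Pi^0_1$-formula $\rho(x)$ satisfying, for each formula $\varphi$,
\[
\EA \vdash \rho(\gnum \varphi) \leftrightarrow \forall p\, \bigl[\, \Prf_U(p, \gnum{\varphi \to \rho(\gnum{\varphi})}) \to \exists q < p\ \Prf_U(q, \gnum{\varphi \to \neg \rho(\gnum{\varphi})}) \,\bigr],
\]
so that, by the deduction theorem, $\rho(\gnum \varphi)$ is a Rosser sentence for $U + \varphi$. Independence follows from the standard Rosser argument: if $U + \varphi \vdash \rho(\gnum \varphi)$, take a least proof code $p_0$, invoke the fixed-point equation to extract a $q < p_0$ proving $\varphi \to \neg\rho(\gnum \varphi)$, and use $\Delta^0_0$-decidability of $\Prf_U$ on a bounded range to conclude $U + \varphi \vdash \neg \rho(\gnum \varphi)$, contradicting consistency; the argument for $U + \varphi \nvdash \neg \rho(\gnum \varphi)$ is symmetric.

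This naive version does not yet yield Consistent Extensionality, since $\rho(\gnum \varphi)$ and $\rho(\gnum \psi)$ mention proofs of syntactically different formulas $\varphi \to \rho(\gnum \varphi)$ and $\psi \to \rho(\gnum \psi)$. The Guaspari--Solovay refinement I would bring in is to let the witness comparison range over all formulas $U$-equivalent to the parameter. Concretely, in place of $\Prf_U(p, \gnum{\varphi \to \rho(\gnum \varphi)})$ one demands the existence of some $\chi \leq p$ and $r \leq p$ with $\Prf_U(r, \gnum{\varphi \leftrightarrow \chi})$ and $\Prf_U(p, \gnum{\chi \to \rho(\gnum \chi)})$, and analogously on the $\neg \rho$ side. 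By the paper's monotonicity-of-coding assumption, all auxiliary quantifiers remain bounded by $p$, so the modified $\rho(x)$ stays $\Pi^0_1$. Consistent Extensionality then drops out: if $U \vdash \varphi \leftrightarrow \psi$ via a specific proof $r_0$, then $\Sigma^0_1$-completeness gives $\EA \vdash \Prf_U(r_0, \gnum{\varphi \leftrightarrow \psi})$, and inside $\EA$ this lets one convert an equivalence-witness $r$ relating $\varphi$ with $\chi$ into one relating $\psi$ with $\chi$, and vice versa; the defining $\Pi^0_1$-conditions of $\rho(\gnum \varphi)$ and $\rho(\gnum \psi)$ thereby become $\EA$-provably interchangeable.

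The main obstacle is to make Independence and Consistent Extensionality cohabit. The refined witness comparison allows $\chi'$ in the ``smaller proof'' clause to be any formula $U$-equivalent to $\varphi$, which, a priori, threatens the Rosser dialectic: a $U$-proof of $\chi' \to \neg \rho(\gnum{\chi'})$ is not literally a $U$-proof of $\varphi \to \neg \rho(\gnum \varphi)$. This is resolved precisely by Consistent Extensionality itself, once established: it yields $\EA \vdash \rho(\gnum{\chi'}) \leftrightarrow \rho(\gnum \varphi)$ whenever $U \vdash \varphi \leftrightarrow \chi'$, and this, inside $U + \varphi$, converts the $\neg \rho(\gnum{\chi'})$-proof into a proof of $\neg \rho(\gnum \varphi)$. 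The order of business is therefore first to verify Consistent Extensionality directly from the fixed-point equation together with $\Sigma^0_1$-completeness of $\EA$, and only then, using that as a lemma, to close the Rosser argument for Independence; the consistency assumption on $U + \varphi$ is invoked only in this final step, exactly as in the remark after Theorem~\ref{grotesmurf} that Consistent Extensionality sidesteps the obstruction raised by equivalence among inconsistent formulas.
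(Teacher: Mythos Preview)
Your high-level strategy---a Guaspari--Solovay witness comparison that ranges over formulas $U$-equivalent to the parameter---is indeed the right one, but the specific implementation you sketch has a gap at the Consistent Extensionality step. You require the equivalence-witness $r$ (the proof of $\varphi \leftrightarrow \chi$) to satisfy $r \leq p$, and then claim that from a fixed standard proof $r_0$ of $\varphi \leftrightarrow \psi$ one can, inside $\EA$, ``convert an equivalence-witness $r$ relating $\varphi$ with $\chi$ into one relating $\psi$ with $\chi$.'' The natural conversion is to compose $r$ with $r_0$, but the resulting proof of $\psi \leftrightarrow \chi$ has code some elementary function of $r$ and $r_0$, not bounded by $p$. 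So the defining conditions of $\rho(\gnum{\varphi})$ and $\rho(\gnum{\psi})$ are \emph{not} $\EA$-provably interchangeable instance by instance, and your argument for Consistent Extensionality does not close. Since your Independence argument then invokes Consistent Extensionality (applied to the auxiliary $\chi'$), it collapses as well.

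The paper's proof confronts exactly this bound-preservation problem and is substantially more elaborate. Rather than a single bounded equivalence-proof, it maintains a sequence of equivalence relations $\sim_k$ built as transitive closures of the equivalences proved by stage $k$, with the crucial restriction that a new pair $(\varphi,\psi)$ is adjoined only if no $\gamma$ already in the class of $\varphi$ or $\psi$ has yet seen a $(U+\gamma)$-proof of $\rho(\gnum{\gamma})$ or $\neg\,\rho(\gnum{\gamma})$. The Rosser enumeration $f(\varphi,\cdot)$ is equipped with a bell mechanism that actively rewrites the output at stage $k$ from $\rho(\gnum{\varphi})^i$ to $\rho(\gnum{\varphi})^{1-i}$ whenever an \emph{earlier} proof of the opposite sentence has already appeared for some $\gamma \sim_p \varphi$. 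With this machinery, Independence is proved directly (Claim~\ref{cl:Rosser}), without appeal to extensionality, and Consistent Extensionality is established afterward (Claim~\ref{cl:e}) by a lengthy case analysis showing that the first Rosser outputs for $\varphi$ and $\psi$ must agree. The ``safe growth'' condition on $\sim_k$ together with the bell is what does the coordinating work that a single bounded witness cannot.
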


\begin{proof}
Let $U$ be any consistent c.e.~extension of $\EA$. 
We fix an elementary formula that is a proof predicate $\Proof_U(x, y, z)$ saying that $y$ is a $(U + x)$-proof of $z$. 
As mentioned in the previous section, the existence of such a formula is guaranteed by Craig’s trick. 
Let $\Prov_U(x, z)$ be the $\Sigma^0_1$-formula $\exists y\, \Proof_U(x, y, z)$ which is a provability predicate of $U + x$. 

In the following, we simultaneously define a 2-ary elementary function $f(\cdot, \cdot)$ and a 
sequence $\{\sim_k\}_k$ of equivalence relations whose underlying sets are $\verz{0,\dots,k-1}$.
For each sentence $\varphi$, the function $f(\varphi, \cdot)$ enumerates all theorems of $U + \varphi$. 
Let $\PR_f(x, z)$ be the $\Sigma^0_1$ formula $\exists y\, (f(x, y) = z)$. 
By the Fixed Point Theorem, a $\Pi^0_1$-formula $\rho(x)$ satisfying the following equivalence is effectively found: 
\[
    \EA \vdash \rho(x) \leftrightarrow \neg \left(\PR_f(x, \gnum{\rho(\dot{x})}) < \PR_f(x, \gnum{\neg\, \rho(\dot{x})}) \right).
\]
By using the parametrised Recursion Theorem in its formalised form over $\EA$, we may use the formulas $\PR_f(x, z)$ and $\rho(x)$ and the relations $\{\sim_k\}$ in the definition of $f$. 
Also, we may use the formula $\rho(x)$ in the definition of $\sim_k$. 
We would like to prove that the $\Pi^0_1$-formula $\rho(x)$ witnesses the statement of the theorem. 

First, we recursively define the sequence $\{\sim_k\}$ of equivalence relations by referring to $(U + \varphi)$-proofs based on $\Proof_U(\varphi, y, z)$ as follows. 
Let ${\sim_0} : = \emptyset$ viewed as an equivalence relation on the empty domain. 
Suppose that we have already defined the relation $\sim_k$ and we define the relation $\sim_{k+1}$ by distinguishing the following two cases: 
\begin{enumerate}
    \item [\textup(X\textup)] If $k+1$ is a $U$-proof of a formula of the form $\varphi \leftrightarrow \psi$ and for all 
    $\gamma$ with ($\varphi \sim_{k} \gamma$ or $\psi \sim_{k} \gamma$) and all $p \leq k$, $p$ is a 
    $(U + \gamma)$-proof of neither $\rho(\gnum{\gamma})$ nor $\neg\, \rho(\gnum{\gamma})$. 
    Then, let $\sim_{k+1}$ be the smallest equivalence relation extending $\sim_k \cup \{(\varphi, \psi)\}$ on $\verz{0,\dots,k}$.
    \item [\textup(Y\textup)] Otherwise, let $\sim_{k+1}$ be the  smallest equivalence relation extending $\sim_k$ on $\verz{0,\dots,k}$.
\end{enumerate}

It is easy to see that $\EA \vdash \forall y, z \, (z \geq y \to$ `$\sim_{z}$ extends $\sim_{y}$'$)$. 

Next, we define the function $f(\cdot, \cdot)$. 
For each sentence $\varphi$, the definition of $f(\varphi, \cdot)$ consists of Procedures 1 and 2. 
It starts with Procedure 1 where $f(\varphi, \cdot)$ outputs formulas in stages referring to $(U + \varphi)$-proofs. 
In Stage $k$, we define the value of $f(\varphi, k)$.  
A $\varphi$-bell is prepared and may ring during Procedure 1. 
After the $\varphi$-bell rings, the definition of $f(\varphi, k)$ switches to Procedure 2 where $f(\varphi, \cdot)$ outputs all formulas. 
Such a definition of functions preparing a bell originates Guaspari and Solovay \cite{guas:ross79}. 

\medskip
\textsc{Stage $k$}: Let $\varphi$ be any sentence.
We define the value of $f(\varphi, k)$. 
We call this stage \textsc{Stage $\varphi$-$k$}.

\medskip
\textsc{Procedure 1}: The $\varphi$-bell has not yet rung. 

We distinguish the following three cases: 

\begin{enumerate}
    \item [\textup{(A)}] If $k$ is not a $(U + \varphi)$-proof of any formula, then $f(\varphi, k)$ is $0$. 

\medskip
    Let $i \in \{0, 1\}$. 

    \item [\textup{(B$_i$)}] If $k$ is a $(U + \varphi)$-proof of $\rho(\gnum{\varphi})^i$ and there exist a number $p < k$ and a sentence $\gamma$ satisfying the following three conditions:
        \begin{enumerate}[1.]
            \item $\varphi \sim_p \gamma$, 
            \item $p$ is a $(U + \gamma)$-proof of $\rho(\gnum{\gamma})^{1-i}$, 
            \item for any $q < p$ and sentence $\lambda$ with $\varphi \sim_p \lambda$, we have that $q$ is a $(U + \lambda)$-proof of neither $\rho(\gnum{\lambda})$ nor $\neg\, \rho(\gnum{\lambda})$. 
         \end{enumerate}
        Then, let $f(\varphi, k) = \rho(\gnum{\varphi})^{1-i}$. 
        Ring the $\varphi$-bell. 

   \medskip
   
    \item [\textup{(C)}] Else, if $k$ is a $(U + \varphi)$-proof of a formula $\sigma$, then $f(\varphi, k) = \sigma$. 
\end{enumerate}

We have completed the definition of \textsc{Stage $\varphi$-$k$}. 

    \medskip

    \textsc{Procedure 2}: The $\varphi$-bell have already rung at \textsc{Stage $\varphi$-$m$} for $m < k$.\\ 
    Let $\{\xi_i\}$ be an effective enumeration of all formulas and let $f(\varphi, m+ 1+i) = \xi_i$. 

    \medskip
We have finished the definition of $f$. 

    \medskip
In the following, we shall prove several claims.
Let $\Bell(x, y)$ and $\Sent(x)$ be elementary formulas saying that `the $x$-bell rings at \textsc{Stage $x$-$y$}' and `$x$ is a sentence', respectively. 

\begin{claim}\label{cl:bell}
$\EA \vdash \forall x \, \bigl(\Sent(x) \land \exists y \, \Bell(x, y) \to \neg \, \Con(U + x) \bigr)$, where $\Con(U + x) \equiv \neg \, \Prov_U(x, \gnum{\bot})$. 
\end{claim}
\begin{proof}
We argue in $\EA$. 
Let $\varphi$ be any sentence. 
Suppose that the $\varphi$-bell rings at \textsc{Stage $\varphi$-$k$} based on (B$_i$) for $i \in \{0, 1\}$. 
In this case, $k$ is a $(U + \varphi)$-proof of $\rho(\gnum{\varphi})^i$ and there exist a number $p < k$ and a sentence $\gamma$ satisfying the following three conditions:
        \begin{enumerate}[1.]
            \item $\varphi \sim_p \gamma$, 
            \item $p$ is a $(U + \gamma)$-proof of $\rho(\gnum{\gamma})^{1-i}$, 
            \item for any $q < p$ and sentence $\lambda$ with $\varphi \sim_p \lambda$, we have that $q$ is a $(U + \lambda)$-proof 
            of neither $\rho(\gnum{\lambda})$ nor $\neg\, \rho(\gnum{\lambda})$. 
        \end{enumerate}
        By (1) and (3), each $q < p$ is a $(U + \gamma)$-proof of neither $\rho(\gnum{\gamma})$ nor $\neg\, \rho(\gnum{\gamma})$. 
        So, $\rho(\gnum{\gamma}), \neg\, \rho(\gnum{\gamma}) \notin \{f(\gamma, 0), \ldots, f(\gamma, p-1)\}$ and the $\gamma$-bell has 
        not yet rung before \textsc{Stage} $\gamma$-$p$. 

        Suppose, towards a contradiction, that \textsc{Stage $\gamma$-$p$} of $f(\gamma, \cdot)$ matches (B$_{1-i}$). 
        Then, there exist $r < p$ and $\delta$ such that $\gamma \sim_r \delta$ and $r$ is a $(U + \delta)$-proof of $\rho(\gnum{\delta})^i$. 
        Since $\varphi \sim_p \gamma \sim_r \delta$ and $r < p$, we have $\varphi \sim_p \delta$. 
        This violates (3). 
        
        So, \textsc{Stage $\gamma$-$p$} of $f(\gamma, \cdot)$ does not match (B$_{1-i}$) and hence $f(\gamma, p) = \rho(\gnum{\gamma})^{1-i}$ based on (C). 
        Therefore, 
        \[
            \PR_f(\gnum{\gamma}, \gnum{\rho(\gnum{\gamma})^{1-i}}) < \PR_f(\gnum{\gamma}, \gnum{\rho(\gnum{\gamma})^i})
        \]
        holds.  
        By formalised $\Sigma^0_1$-completeness, this sentence is $(U + \gamma)$-provable.
        By the witness comparison argument, this sentence implies $\rho(\gnum{\gamma})^i$, so it is also $(U + \gamma)$-provable. 
        Thus $U + \gamma$ is inconsistent because $\rho(\gnum{\gamma})^{1-i}$ also has a $(U + \gamma)$-proof. 
        Since $\varphi \sim_p \gamma$, we conclude that $U + \varphi$ is inconsistent. 
\end{proof}

\begin{claim}\label{cl:equiv}
$\EA \vdash \forall x \, \bigl(\Sent(x) \to \forall y \left(\Prov_{U}(x, y) \leftrightarrow \PR_f(x, y) \right) \bigr)$. 
\end{claim}
\begin{proof}
By the definition of $f$, 
\[
    \EA \vdash \Sent(x) \land \neg \, \exists y \, \Bell(x, y) \to \forall y \left(\Prov_{U}(x, y) \leftrightarrow \PR_f(x, y) \right).  
\]
Since $f$ outputs all formulas in Procedure 2, 
\[
    \EA \vdash \Sent(x) \land \exists y \, \Bell(x, y) \to \forall y \left(\Fml(y) \leftrightarrow \PR_f(x, y) \right), 
\]
where $\Fml(x)$ is an elementary formula saying that $x$ is a formula. 
It is easy to see $\EA \vdash \Sent(x) \land \neg \, \Con(U + x) \to \forall y \left(\Fml(y) \leftrightarrow \Prov_{U}(x, y) \right)$. 
By combining these equivalences with Claim \ref{cl:bell}, 
\[
    \EA \vdash \Sent(x) \land \exists y \, \Bell(x, y) \to \forall y \left(\Prov_{U}(x, y) \leftrightarrow \PR_f(x, y) \right). 
\]
By the law of excluded middle, we conclude: \qedright
\[\EA \vdash \forall x \, \bigl(\Sent(x) \to \forall y \left(\Prov_{U}(x, y) \leftrightarrow \PR_f(x, y) \right) \bigr).\] 
\end{proof}

\begin{claim}\label{cl:Rosser}
If $U + \varphi$ is consistent, then so are $U + \varphi + \rho(\gnum{\varphi})$ and $U + \varphi + \neg\, \rho(\gnum{\varphi})$. 
\end{claim}
\begin{proof}
    Suppose that $U + \varphi$ is consistent. 
    By the soundness of $\EA$, the sentence as in Claim \ref{cl:equiv} is true, and thus $\PR_f(\gnum{\varphi}, y)$ is a provability predicate of $U + \varphi$. 
    So, the $\Pi^0_1$ sentence $\rho(\gnum{\varphi})$ is a $\Pi^0_1$-Rosser sentence of $U + \varphi$. 
    The claim follows by the proof of Rosser's theorem.  
\end{proof}

\begin{claim}\label{cl:bound}
Let $\varphi$ be any sentence such that $U + \varphi$ is consistent. 
Then $\EA$ proves the following statement.
For any sentence $\gamma$ and any numbers $k$ and $p$, if $\varphi \sim_k \gamma$ and $p$ is a $(U + \gamma)$-proof 
of one of $\rho(\gnum{\gamma})$ and $\neg\, \rho(\gnum{\gamma})$, then there exists an $l < p$ such that $\varphi \sim_l \gamma$.
\end{claim}
\begin{proof}
We reason in $\EA$. 
Suppose $\varphi \sim_k \gamma$ and $p$ is a $(U + \gamma)$-proof of one of $\rho(\gnum{\gamma})$ and $\neg\, \rho(\gnum{\gamma})$. 
Let $k_0$ be the smallest number such that $\varphi \sim_{k_0+1} \gamma$ (note that ${\sim_0} = \emptyset$). 
If $\gamma$ is $\varphi$ itself, then $p$ is non-standard by Claim \ref{cl:Rosser}. 
For the standard G\"odel number $m$ of $\varphi$, we have $m+1 < p$ and $\varphi \sim_{m+1} \gamma$. 
We have that $l : = m+1$ satisfies the requirements. 

So, we may assume that $\gamma$ is distinct from $\varphi$. 
By the choice of $k_0+1$, we have $\varphi \not \sim_{k_0} \gamma$, and the definition of $\sim_{k_0+1}$ matches (X). 
Then, there exist sentences $\delta_0$ and $\delta_1$ and $i \in \{0, 1\}$ such that $k_0+1$ is a proof of 
$\delta_0 \leftrightarrow \delta_1$, $\varphi \sim_{k_0} \delta_{1-i}$ and $\gamma \sim_{k_0} \delta_i$. 

Suppose, towards a contradiction, that $p \leq k_0$. 
Since $\gamma \sim_{k_0} \delta_i$ and $p$ is a $(U + \gamma)$-proof of $\rho(\gnum{\gamma})$ or 
$\neg\, \rho(\gnum{\gamma})$, we have that the definition of $\sim_{k_0+1}$ does not match (X), a contradiction. 
So, we have $k_0+1 \leq p$. 
Since $\rho(\gnum{\gamma})$ differs from $\delta_0 \leftrightarrow \delta_1$, we have $k_0+1 \neq p$. 
Therefore, $k_0+1 < p$. 
So, $l : = k_0+1$ satisfies the requirements. 
\end{proof}

Claim \ref{cl:Rosser} together with the following Claim \ref{cl:e} completes the proof of the theorem.

\begin{claim}\label{cl:e}
If $U + \varphi$ is consistent and $U \vdash \varphi \leftrightarrow \psi$, then $\EA \vdash \rho(\gnum{\varphi}) \leftrightarrow \rho(\gnum{\psi})$. 
\end{claim}
\begin{proof}
Suppose that $U + \varphi$ is consistent and $U \vdash \varphi \leftrightarrow \psi$. 
Then, $U + \psi$ is also consistent. 
Let $n$ be a (standard) $U$-proof of $\varphi \leftrightarrow \psi$. 
It suffices to prove that $\EA \vdash \neg\, \rho(\gnum{\varphi}) \to \neg\, \rho(\gnum{\psi})$. 

We reason in $\EA$: 
Suppose that $\neg\, \rho(\gnum{\varphi})$ holds, that is, 
\[
    \PR_f(\gnum{\varphi}, \gnum{\rho(\gnum{\varphi})}) < \PR_f(\gnum{\varphi}, \gnum{\neg\, \rho(\gnum{\varphi})}) \tag{$\ast$}
\]
holds. 
Then, $\PR_f(\gnum{\varphi}, \gnum{\rho(\gnum{\varphi})})$ holds, that is, $f(\varphi, \cdot)$ eventually outputs $\rho(\gnum{\varphi})$. 
By Claim \ref{cl:equiv}, $U + \varphi$ proves $\rho(\gnum{\varphi})$. 
By formalised $\Sigma^0_1$-completeness, $U + \varphi$ proves the true $\Sigma^0_1$-sentence $\neg\, \rho(\gnum{\varphi})$. 
Hence $U + \varphi$ is inconsistent. 
Since $U \vdash \varphi \leftrightarrow \psi$, we also have that $U + \psi$ is inconsistent. 

Let $k$ be the smallest $(U + \varphi)$-proof of either $\rho(\gnum{\varphi})$ or $\neg\, \rho(\gnum{\varphi})$. 
Let $l$ be the smallest $(U + \psi)$-proof of either $\rho(\gnum{\psi})$ or $\neg\, \rho(\gnum{\psi})$. 
By Claim \ref{cl:Rosser} and the fact that $n$ is standard, we have $k > n$ and $l > n$. 
It follows that $\varphi \sim_n \psi$. 
Also, by construction, $f(\varphi, \cdot)$ outputs neither $\rho(\gnum{\varphi})$ nor 
$\neg\, \rho(\gnum{\varphi})$ before \textsc{Stage $\varphi$-$k$}. 
So, $(\ast)$ implies $f(\varphi, k) = \rho(\gnum{\varphi})$. 
We would like to show that $\neg\, \rho(\gnum{\psi})$ holds. 
By the choice of $l$, it suffices to prove $f(\psi, l) = \rho(\gnum{\psi})$. 
We prove $f(\psi, l) = \rho(\gnum{\psi})$ by distinguishing several cases.

\medskip
Case 1: $k$ is a $(U + \varphi)$-proof of $\neg\, \rho(\gnum{\varphi})$. \\
Since $f(\varphi, k) = \rho(\gnum{\varphi})$, this output is based on (B$_1$) at \textsc{Stage $\varphi$-$k$}. 
Thus, there exist a number $p < k$ and a sentence $\gamma$ satisfying the following three conditions:
        \begin{enumerate}[i.]
            \item  $\varphi \sim_p \gamma$, 
            \item $p$ is a $(U + \gamma)$-proof of $\rho(\gnum{\gamma})$, 
            \item  for any $q < p$ and sentence $\lambda$ with $\varphi \sim_p \lambda$, we have that $q$ is a $(U + \lambda)$-proof of 
            neither $\rho(\gnum{\lambda})$ nor $\neg\, \rho(\gnum{\lambda})$. 
         \end{enumerate}
By Claim \ref{cl:Rosser}, we have $p > n$. 
So, $\varphi \sim_p \psi$. 

\medskip

Case 1.1: $l$ is a $(U + \psi)$-proof of $\rho(\gnum{\psi})$. \\
Suppose, towards a contradiction, that the construction of $f(\psi, l)$ matches the condition (B$_0$) at \textsc{Stage $\psi$-$l$}. 
Then, there exist a number $r < l$ and a sentence $\delta$ satisfying the following three conditions:
        \begin{enumerate}[i.]
        \setcounter{enumi}{3}
            \item  $\psi \sim_r \delta$, 
            \item $r$ is a $(U + \delta)$-proof of $\neg\, \rho(\gnum{\delta})$, 
            \item  for any $q < r$ and sentence $\lambda$ with $\psi \sim_r \lambda$, we have that $q$ is a $(U + \lambda)$-proof of 
            neither $\rho(\gnum{\lambda})$ nor $\neg\, \rho(\gnum{\lambda})$. 
         \end{enumerate}

Since $\rho(\gnum{\gamma})$ differs from $\neg\, \rho(\gnum{\delta})$, we have $p \neq r$. 
If $r < p$, then we have $\varphi \sim_p \psi \sim_p \delta$ and $r$ is a $(U + \delta)$-proof of $\neg\, \rho(\gnum{\delta})$.
This violates (iii). 
If $p < r$, then we have $\psi \sim_r \varphi \sim_r \gamma$ and $\rho(\gnum{\gamma})$ has a $(U + \gamma)$-proof $p < r$.
This violates (vi). 

In either case, we have a contradiction. 
Hence, the construction of $f(\psi, l)$ does not match the condition (B$_0$) at \textsc{Stage $\psi$-$l$}. 
We conclude that $f(\psi, l) = \rho(\gnum{\psi})$ based on (C).  

\medskip

Case 1.2: $l$ is a $(U + \psi)$-proof of $\neg\, \rho(\gnum{\psi})$. \\
Since $\rho(\gnum{\gamma})$ differs from $\neg\, \rho(\gnum{\psi})$, we have $p \neq l$. 
Since $\varphi \sim_p \psi$ and $l$ is a $(U + \psi)$-proof of $\neg\, \rho(\gnum{\psi})$, the assumption $l < p$ violates (iii). 
So, we have $p < l$. 
Since $\psi \sim_p \varphi \sim_p \gamma$, we have that $p$ and $\gamma$ witness the condition (B$_1$) at \textsc{Stage $\psi$-$l$}. 
Therefore, we conclude $f(\psi, l) = \rho(\gnum{\psi})$. 

\medskip
Case 2: $k$ is a $(U + \varphi)$-proof of $\rho(\gnum{\varphi})$. 

\medskip
Case 2.1: $l$ is a $(U + \psi)$-proof of $\rho(\gnum{\psi})$. \\
Suppose, towards a contradiction, that the construction of $f(\psi, l)$ matches the condition (B$_0$) at \textsc{Stage $\psi$-$l$}. 
Then, there exist a number $r < l$ and a sentence $\delta$ satisfying the following three conditions:
        \begin{enumerate}[i.]
        \setcounter{enumi}{6}
            \item  $\psi \sim_r \delta$, 
            \item  $r$ is a $(U + \delta)$-proof of $\neg\, \rho(\gnum{\delta})$, 
            \item  for any $q < r$ and sentence $\lambda$ with $\psi \sim_r \lambda$, we have that $q$ is a $(U + \lambda)$-proof of 
            neither $\rho(\gnum{\lambda})$ nor $\neg\, \rho(\gnum{\lambda})$. 
         \end{enumerate}

Since $\rho(\gnum{\varphi})$ differs from $\neg\, \rho(\gnum{\delta})$, we have $k \neq r$. 
If $k < r$, then we have $\psi \sim_r \varphi$ and $k$ is a $(U + \varphi)$-proof of $\rho(\gnum{\varphi})$. 
This violates (ix). 
So, we have $r < k$. 
Since $\varphi \sim_r \psi \sim_r \delta$, we have that $r$ and $\delta$ witness the condition (B$_0$) at \textsc{Stage $\varphi$-$k$}. 
This implies $f(\varphi, k) = \neg\, \rho(\gnum{\varphi})$, a contradiction. 

Hence, the construction of $f(\psi, l)$ does not match the condition (B$_0$) at \textsc{Stage $\psi$-$l$}. 
We conclude that $f(\psi, l) = \rho(\gnum{\psi})$ based on (C).  

\medskip
Case 2.2: $l$ is a $(U + \psi)$-proof of $\neg\, \rho(\gnum{\psi})$. \\
Since $\rho(\gnum{\varphi})$ differs from $\neg\, \rho(\gnum{\psi})$, we have $k \neq l$. 

\medskip
Case 2.2.1: $l < k$.\\
We have $\psi \sim_k \varphi$ and $k$ is a $(U + \varphi)$-proof of $\rho(\gnum{\varphi})$. 
By the least number principle, we find a number $r \leq k$ and a sentence $\delta$ satisfying the following conditions:
\begin{enumerate}[i.]
\setcounter{enumi}{9}
    \item  $\psi \sim_r \delta$,
    \item  $r$ is a $(U + \delta)$-proof of either $\rho(\gnum{\delta})$ or $\neg\, \rho(\gnum{\delta})$, 
    \item  for any $q < r$ and sentence $\lambda$ with $\psi \sim_q \lambda$, we have that $q$ is a $(U + \lambda)$-proof of 
    neither $\rho(\gnum{\lambda})$ nor $\neg\, \rho(\gnum{\lambda})$. 
\end{enumerate}

By Claim \ref{cl:bound}, Clause (xii) can be strengthened as follows: 
\begin{enumerate}[i$'$.]
\setcounter{enumi}{11}
    \item  for any $q < r$ and sentence $\lambda$ with $\psi \sim_r \lambda$, we have that $q$ is a $(U + \lambda)$-proof of 
    neither $\rho(\gnum{\lambda})$ nor $\neg\, \rho(\gnum{\lambda})$. 
\end{enumerate}

Suppose, towards a contradiction, that $r$ is a $(U + \delta)$-proof of $\neg\, \rho(\gnum{\delta})$. 
Since $\neg\, \rho(\gnum{\delta})$ differs from $\rho(\gnum{\varphi})$, we have $r \neq k$. 
Then $r < k$. 
Since $\varphi \sim_r \psi \sim_r \delta$, (x) and (xii$'$) imply that $r$ and $\delta$ witness the condition (B$_0$) at \textsc{Stage $\varphi$-$k$}. 
Thus, $f(\varphi, k) = \neg\, \rho(\gnum{\varphi})$, a contradiction. 

Therefore, (xi) implies that $r$ is a $(U + \delta)$-proof of $\rho(\gnum{\delta})$. 
If $l \leq r$, then we have $l < r$ because $\neg\, \rho(\gnum{\psi})$ differs from $\rho(\gnum{\delta})$. 
Since $\psi \sim_r \psi$ and $l$ is a $(U + \psi)$-proof of $\neg\, \rho(\gnum{\psi})$, this violates (xii$'$). 
So, we have $r < l$.
Thus, (x) and (xii$'$) imply that $r$ and $\delta$ witness the condition (B$_1$) at \textsc{Stage $\psi$-$l$}. 
We conclude $f(\psi, l) = \rho(\gnum{\psi})$. 

\medskip 

Case 2.2.2: $k < l$.\\
Since $\varphi \sim_l \psi$ and $l$ is a $(U + \psi)$-proof of $\neg\, \rho(\gnum{\psi})$, by the least number principle, we find a 
number $p \leq l$ and a sentence $\gamma$ satisfying the following conditions:
\begin{enumerate}[i.]
\setcounter{enumi}{12}
    \item  $\varphi \sim_p \gamma$,
    \item  $p$ is a $(U + \gamma)$-proof of either $\rho(\gnum{\gamma})$ or $\neg\, \rho(\gnum{\gamma})$, 
    \item  for any $q < p$ and sentence $\lambda$ with $\varphi \sim_q \lambda$, we have that $q$ is a $(U + \lambda)$-proof of 
    neither $\rho(\gnum{\lambda})$ nor $\neg\, \rho(\gnum{\lambda})$. 
\end{enumerate}

By Claim \ref{cl:bound}, Clause (xv) can be strengthened as follows: 
\begin{enumerate}[i$'$.]\setcounter{enumi}{14}
    \item for any $q < p$ and sentence $\lambda$ with $\varphi \sim_p \lambda$, we have that $q$ is a $(U + \lambda)$-proof of 
    neither $\rho(\gnum{\lambda})$ nor $\neg\, \rho(\gnum{\lambda})$. 
\end{enumerate}

Suppose, towards a contradiction, that $p$ is a $(U + \gamma)$-proof of $\neg\, \rho(\gnum{\gamma})$. 
If $k \leq p$, then we have $k < p$ because $\rho(\gnum{\varphi})$ differs from $\neg\, \rho(\gnum{\gamma})$. 
Since $\varphi \sim_p \varphi$ and $p$ is a $(U + \varphi)$-proof of $\rho(\gnum{\varphi})$, this violates (xv$'$). 
So, we have $p < k$.
Then, (xiii) and (xv$'$) imply that $p$ and $\gamma$ witness the condition (B$_0$) at \textsc{Stage $\varphi$-$k$}. 
Thus, $f(\varphi, k) = \neg\, \rho(\gnum{\varphi})$, a contradiction. 

Therefore, (xiv) implies that $p$ is a $(U + \gamma)$-proof of $\rho(\gnum{\gamma})$. 
Since $\neg\, \rho(\gnum{\psi})$ differs from $\rho(\gnum{\gamma})$, we have $l \neq p$, and hence $p < l$. 
Since $\psi \sim_p \varphi \sim_p \gamma$, we have that $p$ and $\gamma$ witness the condition (B$_1$) at \textsc{Stage $\psi$-$l$}. 
We conclude $f(\psi, l) = \rho(\gnum{\psi})$. 
\end{proof}
We have finished our proof of the theorem. 
\end{proof}

\section{A conditionally extensional \strynd\ Formula}\label{cdorf}
In this section, we develop our second positive result, to wit Theorem~\ref{maintheorem}. There is a conditionally extensional \strynd\ formula for
  c.e.~base theories extending {\sf PA}. We note that our previous positive result, Theorem~\ref{con_ext}
  works for extensions of {\sf EA}. Moreover, it only requires consistent extensionality. 
  
  We think that, using the tricks from \cite{viss:abso21}, one can reduce the assumption that we have an extension
  of {\sf PA} to the assumption of weaker theories, but we did not explore this idea further. In any case, the presentation for extensions of {\sf PA} is far simpler.
  Also, the result can be strengthened to c.e. sequential theories with induction for the full language on some interpretation of {\sf Q}.
  This last improvement requires minor adaptations.

  What we gain in the present result is strong independence, that is the negation of the independent sentence is $\Pi_1$-conservative over the base
  theory.
  
The conditionally extensional \strynd\ $\Pi^0_1$-formula offered in this paper is an adaptation of the construction of 
  \cite{viss:abso21}. It  delivers slow consistency statements in the style of
 \cite{viss:abso21}. See also \cite{frie:slow13} for a more proof-theoretic take on 
 slow provability. The construction of the formulas is fixed-point-free.  
 
 \subsection{Conventions and basic Definitions}

  We will assume we use a coding like the Smullyan coding. See \cite{viss:numb25} for a careful presentation of 
  that coding. The main property we will use is that the code of a syntactic object of length  (= number of symbols) $n$ is of the order
  $2^{cn}$ for standard $c$ and that the arithmetical consequences of this fact are {\sf EA}-verifiable. For example,
  {\sf EA} will prove that the tracking function of concatenation is of the order of multiplication.
  As in Section~\ref{csorf}, we will also assume that our coding is monotonic, i.e., the code of a syntactic object (term, formula, proof) that is part of another syntactic object 
  is smaller than the code of the syntactic object in which it occurs.

  We will need a somewhat more refined analysis of the proof and provability. The formula $\Bews(x,y)$ stands for
  some standard elementary arithmetisation of `$x$ is a proof  of $y$ from assumptions'.
  Let the elementary formula $\Ass(x,y)$ stand for `$y$ is an assumption that occurs in the proof $x$'. 
  As usual, we assume that $\Ass(x,y)$ implies $y<x$ over
  {\sf EA}. We define:
  \begin{itemize}
  \item
   $\Proof_{\tupel\alpha}(x,y) :\iff \Bews(x,y) \wedge \forall z < x\, (\Ass(x,z) \to \alpha(z))$.
   \item
    $\opr_{\tupel\alpha}\phi :\iff \exists x\, \Proof_{\tupel\alpha}(x,\gnum \phi)$. 
    \item
    As usual we write $\oco$ for $\neg\opr\neg$.
  \item
   $\opr_{\tupel{\alpha}+\phi}\psi :\iff \opr_{\tupel{\alpha'}}\psi$, where $\alpha'(x) \iff (\alpha(x) \vee x= \gnum\phi)$.\\
   If $\alpha$ is contextually given we will usually omit it, writing just $\opr_\phi\psi$.
   \end{itemize}
   
   \noindent
   Our meta-theory in this section will be \eac. Here $\mathrm B\Sigma_1$ is $\Sigma_1$-collection. The advantage of having collection is that
we can work with the $\Sigma_1$-sentences are, modulo provable equivalence, closed under universal bounded quantification.

  \subsection{The $\Pi^0_1$-conservativity of Inconsistency}
We will need the well-known fact that inconsistency statements are $\Pi^0_1$-conservative.
Per Lindstr\"om, in \cite[p.~94]{lind:aspe03}, ascribes this result to Georg Kreisel in \cite{kreis:weak62}. 
Since, over Peano Arithmetic, $\Pi^0_1$-conservativity coincides with interpretability, the result also
follows from Feferman's result concerning the interpretability of inconsistency. See \cite{fefe:arit60}.
Feferman's original proof only works for extensions of {\sf PA}. Per Lindstr\"om and Albert Visser
discovered, independently, a simpler proof that works for a wider range of theories.

Here we give a direct proof for the case of $\Pi^0_1$-conservativity.

\begin{theorem}\label{kreiselsmurf}
    Let $V$ be an extension of  \eac\ and let the axiom set
    of $V$ be given by a $\Sigma_1$-predicate $\alpha$. Then, the inconsistency statement $\opr_{\,\tupel{\alpha}} \bot$
    is $\Pi^0_1$-conservative over $V$.
\end{theorem}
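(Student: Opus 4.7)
The plan is to reduce the conservativity claim to an instance of Gödel's Second Incompleteness Theorem applied to the extension $U+\sigma$, where $\sigma$ is the $\Sigma^0_1$ sentence equivalent to the negation of the given $\Pi^0_1$ sentence.

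Suppose $U \vdash \opr_{\tupel\alpha}\bot \to \pi$ for a $\Pi^0_1$ sentence $\pi$, and let $\sigma$ be a $\Sigma^0_1$ sentence that is {\sf EA}-provably equivalent to $\neg\,\pi$. The assumption can then be rephrased as $U \vdash \sigma \to \oco_{\tupel\alpha}\top$. I would combine this with two further ingredients, both provable inside {\sf EA}: (i) formalized $\Sigma^0_1$-completeness for $\sigma$, giving $U \vdash \sigma \to \opr_{\tupel\alpha}\sigma$, and (ii) a formalized deduction-theorem calculation showing that $U$ itself verifies $\opr_{\tupel\alpha}\sigma \wedge \oco_{\tupel\alpha}\top \to \oco_{\tupel{\alpha}+\sigma}\top$, since any $(U+\sigma)$-proof of $\bot$ yields a $U$-proof of $\neg\,\sigma$, which together with $\opr_{\tupel\alpha}\sigma$ yields $\opr_{\tupel\alpha}\bot$.

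Chaining these three facts gives $U \vdash \sigma \to \oco_{\tupel{\alpha}+\sigma}\top$, and hence $U+\sigma \vdash \oco_{\tupel{\alpha}+\sigma}\top$. Since $\sigma$ is $\Sigma^0_1$, the predicate $\alpha'(x) \iff \alpha(x) \vee x = \gnum{\sigma}$ is still elementary, so Gödel's Second Incompleteness Theorem applies to $U+\sigma$ and forces this theory to be inconsistent. Equivalently, $U \vdash \neg\,\sigma$, i.e., $U \vdash \pi$, as required.

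The only step that is not completely mechanical is the internal verification of item (ii): one has to check that the chosen arithmetisation $\Proof_{\tupel\alpha}$ behaves well under adding a single new axiom, so that the deduction-theorem transformation of a $(U+\sigma)$-proof of $\bot$ into a $U$-proof of $\sigma\to\bot$ (and thence, via $\opr_{\tupel\alpha}\sigma$, a proof of $\bot$ from $\alpha$ alone) is formalisable in {\sf EA}. Given the standard conventions for $\Bews$ and $\Ass$ fixed in the excerpt, this is routine but has to be stated carefully to avoid a circular reference to the very provability predicate being analysed.
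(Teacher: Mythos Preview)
Your proof is correct and follows essentially the same route as the paper's: pass to the $\Sigma^0_1$ negation $\sigma$, use formalized $\Sigma^0_1$-completeness together with the assumption to show $U+\sigma$ proves its own consistency, and conclude by the Second Incompleteness Theorem. The only cosmetic difference is that the paper writes the self-consistency statement as $\oco_{\tupel\alpha}\sigma$ and invokes G2 (or L\"ob) directly, whereas you explicitly unpack the formalized deduction theorem to rewrite this as $\oco_{\tupel{\alpha}+\sigma}\top$ before applying G2 for the extended predicate; these are equivalent formulations of the same step.
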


\begin{proof}
We note that $\opr_{\,\tupel{\alpha}}$ is (\eac)-provably equivalent to a $\Sigma^0_1$-formula, so it will, over \eac,
satisfy the third L\"ob Condition, to wit, provable implies provably provable.

    Consider any $\Pi^0_1$-sentence $\pi$ and let $\sigma$ be a $\Sigma^0_1$-sentence that is (\eac)-provably equivalent
    to $\neg\, \pi$. Suppose $V+\opr_{\,\tupel{\alpha}} \bot \vdash \pi$. Then, $V+\sigma \vdash \oco_{\,\tupel{\alpha}}\top$ and, hence,
    $V+\sigma \vdash \oco_{\,\tupel{\alpha}}\sigma$. So, by the Second Incompleteness Theorem 
    $V+\sigma \vdash \bot$. Ergo, $V\vdash \pi$.
\end{proof}

Curiously, even if $\Pi^0_1$-conservativity and interpretability diverge on \eac\ and many of its extensions,
both the interpretability of inconsistency and the $\Pi^0_1$-conservativity of inconsistency remain valid.
This can be partially explained by the fact that we happen to have both
the $\Pi^0_1$-conservativity of $V$ over \eac\ plus the consistency
statement for $V$ and the interpretability of $V$ in \eac\ plus the consistency
statement for $V$.

\begin{remark}
We note that the negation $\neg \, \rho$ of $\Pi^0_1$-Rosser sentence $\rho$ is not $\Pi^0_1$-conservative since it implies $\neg\, (\Prov_V(\gnum{\neg\, \rho}) \leq \Prov_V(\gnum{\rho}))$.
\end{remark}
  
\subsection{Uniform Provability}\label{prinsbsmurf}
We develop the basics of uniform provability  in this subsection.
We fix our base theory $U$ that, (\eac)-verifiably, 
extends {\sf PA} (in the language of {\sf PA}). 
It follows that $U$ is, (\eac)-verifiably, essentially reflexive. The axiomatisation of $U$ will be given by
the elementary formula $\alpha$. We will suppress the subscript $\tupel\alpha$, thus writing e.g.
$\opr\psi$ for $\opr_{\tupel{\alpha}}\psi$ and $\opr_\phi\psi$ for $\opr_{\tupel{\alpha}+\phi}\psi$, etcetera.

 Uniform proofs are a minor modification of proofs. We will use uniform provability to define
 our notion of smallness and we will use smallness to define slow provability as provability from
 small axioms.
 
We define the relation $\preceq_n$ on sentences $\leq n$ as follows. We have:
$\phi \preceq_n \psi$ iff $\psi$ follows by propositional logic from $\phi$ plus the sentences 
provable in $U$ by proofs $\leq n$. 
We use $\preceq_x$ for the arithmetisation of $\preceq_n$.
We note that $\preceq_x$ will be (\eac)-provably reflexive and transitive.

 We define:
 \begin{itemize}
  \item
   $\opr_{\phi,(x)}\psi :\iff \exists p\bleq x\, \Proof_\phi(p,\gnum\psi)$,
\item
$\gopr_{\phi,(x)}\psi :\iff \exists \chi \bleq x \, (\phi \preceq_x \chi \wedge  \opr_{\chi,(x)} \psi)$,
\item
$\gopr_\phi\psi :\iff \exists x\, \gopr_{\phi,(x)} \psi$.
  \end{itemize}
  
  The grey box stands for \emph{uniform provability}. We note that the uniformity is w.r.t.~the formula in the subscript.
  The next theorem explicates the relationship between uniform provability and provability.
  
  \begin{theorem}\label{mageresmurf}
  \begin{enumerate}[1.]
  \item
  $\eac \vdash \opr_{\phi,(x)} \psi \to \gopr_{\phi,(x)}\psi$, 
  \item
  There is an elementary function $F$ such that
  $\eac \vdash \gopr_{\phi,(x)} \psi \to \opr_{\phi,(Fx)}\psi$, 
  \item
  $\eac \vdash \opr_\phi\psi \iff \gopr_\phi\psi$.
  \end{enumerate}
  \end{theorem}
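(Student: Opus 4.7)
The plan is to handle the three clauses in order. Clauses (1) and (3) are essentially bookkeeping while clause (2) carries the real content. For clause (1), I would argue inside \textsf{EA} as follows. Suppose $\opr_{\phi,(x)}\psi$ holds, as witnessed by some proof $p \leq x$ with $\Proof_\phi(p,\gnum\psi)$. By our monotonicity convention on coding, every assumption appearing in $p$ has G\"odel code less than $p$, and in particular $\phi \leq x$. Since $\preceq_x$ is \textsf{EA}-provably reflexive, we have $\phi \preceq_x \phi$. Taking $\chi := \phi$ then supplies the witness required by the definition of $\gopr_{\phi,(x)}\psi$.

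For clause (2), I would unpack the definition to obtain some $\chi \leq x$ with $\phi \preceq_x \chi$ together with some $p \leq x$ with $\Proof_\chi(p,\gnum\psi)$. Unfolding $\phi \preceq_x \chi$, the sentence $\chi$ is a propositional consequence of $\phi$ together with a finite collection of sentences $\theta_1,\dots,\theta_k$ (with $k \leq x$), each admitting a $U$-proof of code $\leq x$. Elementarily in $x$ one assembles a single $(U+\phi)$-proof of $\psi$ by concatenating fixed $U$-proofs of the $\theta_i$; splicing in a propositional derivation of $\chi$ from $\phi,\theta_1,\dots,\theta_k$ whose length is elementary in $x$ (since the number and size of premises are all bounded by $x$); and finally appending the given proof $p$ of $\psi$ from $U+\chi$, discharging the assumption $\chi$ against its newly derived copy. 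The resulting proof has code bounded by some fixed elementary $F(x)$, and the construction is verifiable in \textsf{EA}.

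For clause (3), the right-to-left direction is immediate from clause (1) by existentially quantifying $x$ on both sides, and the left-to-right direction follows from clause (2) in the same way, with the bound $x$ replaced by $F(x)$. The main obstacle is the internal verification of clause (2): one must confirm that the length of the propositional derivation of $\chi$ from $\phi,\theta_1,\dots,\theta_k$ really is elementary in $x$, and that concatenation and assumption-discharge of the sub-proofs are carried out by an elementary function with an explicit, \textsf{EA}-provable bound. Granted a standard Hilbert-style or sequent propositional calculus with the usual elementary tautology compilation, no proof-theoretic subtleties arise; the argument is routine but bookkeeping-heavy.
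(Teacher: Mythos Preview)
The paper gives no proof here at all, writing only ``We leave the simple proof to the reader.'' Your argument is the natural one and is essentially correct. Two small slips to fix. First, in clause~(3) you have the two directions reversed: clause~(1) yields the forward implication $\opr_\phi\psi\to\gopr_\phi\psi$, while clause~(2) yields the converse. Second, in clause~(1), the step ``in particular $\phi\le x$'' does not follow from the monotonicity of coding alone, since $\phi$ need not occur as an assumption in $p$ at all---if $p$ happens to use only $U$-axioms, nothing forces $\gnum\phi\le x$, yet you still need $\chi\le x$ in order to take $\chi:=\phi$ as witness. This edge case only concerns the finitely many standard values $x<\gnum\phi$; whether it is a genuine obstacle depends on whether the relation $\preceq_x$ is formally restricted to sentences of code${}\le x$ (the paper's prose suggests so but is not fully explicit), so this is as much a wrinkle in the paper's definitions as in your argument.
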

  
  \begin{proof}
      We just sketch the proof of (2). We reason informally, but it will be clear that our argument can be
      formalised  in \eac, and that a standard $F$ can be read off from the proof. Let $x$ be given. 
      We will assume that $\phi$ is smaller than $x$, since $\phi$ is fixed and standard (otherwise, we replace $x$ in the argument by
      $\phi$). Suppose
      $\phi \preceq_x\chi$ and that $p\leq x$ is a proof of $\psi$ in $U$ plus $\chi$. 
      
      Let $\mc P$ be the set of $U$-proofs that are $\leq x$ and let $\mc C$ be the set of conclusions of $\mc P$.
      We know that there are $\leq x$ elements of $\mc C$ and that they are all $\leq x$ and, similarly, for $\mc P$.
      We can produce a proof $q$ (using only propositional logic) of $\chi$ from $\phi$ plus $\mc C$.
      We note that all sentences that play the role of atoms in our propositional proof are $\leq x$. It follows that
      the number of symbols in the proof is of the order $x2^x$. See e.g. \cite{kraj:boun95} for estimates on propositional proofs. 
      So, $q$ can be estimated by $2^{c2^x}$. Our desired proof, say $r$, of $\psi$ from $\phi$ can be obtained roughly by concatenating
      all the elements of $\mc P$ and $q$. So, $r$ can be estimated by $x^x2^{x2^x}$.
  \end{proof}
  
  The next theorem gives us the desired uniformity property.
  
  \begin{theorem}\label{bravesmurf}
  \begin{enumerate}[i.]
  \item
  Suppose $p$ is the code of a $U$-proof of $\phi \to \psi$.
  Then, \[\eac\vdash \forall x\bgeq \num p\,\forall \chi \bleq x\,(\gopr_{\psi,(x)}\chi \to \gopr_{\phi,(x)}\chi).\]
  \item
    Suppose $p$ is the code of a $U$-proof of $\phi \iff \psi$.
  Then, \[\eac\vdash \forall x\bgeq \num p\,\forall \chi \bleq x\,(\gopr_{\psi,(x)}\chi \iff \gopr_{\phi,(x)}\chi).\]
  \end{enumerate}
  \end{theorem}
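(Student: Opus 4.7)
The plan is to prove (i) directly by transferring the existential witness $\chi'$ appearing in $\gopr_{\psi,(x)}\chi$ to a witness for $\gopr_{\phi,(x)}\chi$, and then to deduce (ii) by applying (i) twice (once to each direction of the biconditional). Everything is bounded and elementary, so formalization in {\sf EA} is immediate.

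For (i), fix $x \geq \num p$ and $\chi \leq x$ with $\gopr_{\psi,(x)}\chi$. Unwinding the definition, there is $\chi' \leq x$ with $\psi \preceq_x \chi'$ and $\opr_{\chi',(x)} \chi$. The idea is that the \emph{same} $\chi'$ will witness $\gopr_{\phi,(x)} \chi$; all that needs checking is $\phi \preceq_x \chi'$. By the monotonicity of coding assumed in the paper, $\psi$ is a strict subformula of $\phi \to \psi$, which in turn occurs in the proof $p$; hence $\psi < p \leq x$. Since $p$ is a $U$-proof of $\phi \to \psi$ with $p \leq x$, the sentence $\phi \to \psi$ lies among the sentences provable in $U$ by proofs $\leq x$. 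By propositional modus ponens, $\psi$ follows from $\phi$ together with these sentences, so $\phi \preceq_x \psi$. Combining this with $\psi \preceq_x \chi'$ via the {\sf EA}-verified transitivity of $\preceq_x$ noted earlier, we obtain $\phi \preceq_x \chi'$. Since $\chi' \leq x$ and $\opr_{\chi',(x)} \chi$ was already given, the witness $\chi'$ establishes $\gopr_{\phi,(x)}\chi$.

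For (ii), note that from a $U$-proof $p$ of $\phi \iff \psi$ we can elementarily construct $U$-proofs $p_0$ of $\phi \to \psi$ and $p_1$ of $\psi \to \phi$ whose codes are bounded by some fixed elementary function of $p$; by replacing $\num p$ with a slightly larger elementary bound if necessary (or by absorbing the bookkeeping into the monotonicity of coding), we may assume $p_0, p_1 \leq p \leq x$. Two applications of (i) then yield both implications, giving $\gopr_{\psi,(x)}\chi \iff \gopr_{\phi,(x)} \chi$ for all $\chi \leq x$.

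The main ``obstacle'' is really just ensuring the bounds line up: one must check that $\psi \leq x$ (so that $\phi \preceq_x \psi$ makes sense as a statement about bounded sentences) and that $\chi' \leq x$ persists as the transferred witness. Both facts follow from the monotonicity-of-coding conventions adopted at the start of the section, so no essentially new ingredient is needed beyond the transitivity of $\preceq_x$ and the observation that a bounded $U$-proof of $\phi \to \psi$ provably puts $\phi \to \psi$ into the set of sentences licensing $\preceq_x$-strengthenings.
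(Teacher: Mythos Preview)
Your argument for (i) is essentially identical to the paper's: pick the witness $\theta$ (your $\chi'$), observe that the proof $p\leq x$ of $\phi\to\psi$ yields $\phi\preceq_x\psi$, and conclude by transitivity of $\preceq_x$.

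Your reduction of (ii) to (i), however, does not quite go through as written. From a $U$-proof $p$ of $\phi\iff\psi$ you build proofs $p_0,p_1$ of the two implications, but these will in general have codes \emph{larger} than $p$, and you cannot ``replace $\num p$ with a slightly larger elementary bound'' without changing the statement you are asked to prove (which is about \emph{all} $x\geq\num p$). Nor does monotonicity of coding help here: it bounds formulas by proofs, not derived proofs by the original proof. The clean fix---and this is what the paper does when it says ``Case (ii) is similar''---is to bypass (i) and argue directly: since $p\leq x$ is a $U$-proof of $\phi\iff\psi$, that biconditional is among the sentences licensing $\preceq_x$, so propositional logic alone gives both $\phi\preceq_x\psi$ and $\psi\preceq_x\phi$. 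Then run the transitivity step in both directions.
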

 
\begin{proof}
We prove (i). Case (ii) is similar. Suppose $p$ is the code of a $U$-proof of $\phi \to \psi$.
It follows that $\phi \preceq_p \psi$ and, hence, that $\eac \vdash \forall x\bgeq \num p\; \phi \preceq_x \psi$.

We reason in \eac. Suppose $x\geq \num p$ and $\gopr_{\psi,(x)}\chi$.
So, for some $\theta$, we have $ \theta \leq x$ and  $\psi \preceq_x \theta$ and $\opr_{\theta,(x)} \chi$.
We also have $\phi \preceq_x \psi$ and, hence, $\phi \preceq_x\theta$. It follows that
 $\gopr_{\phi,(x)}\chi$
\end{proof}
 
 \subsection{Uniform Smallness}
 Let $U$ be our base theory as introduced in Section~\ref{prinsbsmurf}.
 
We use $\sigma$, $\sigma'$, \dots\ to range over (G\"odel numbers of) $\Sigma^0_1$-sentences. 
Let $\phi$ be any arithmetical sentence. 
We define \emph{uniform $\phi$-smallness} as follows:
\begin{itemize}
\item
$ \mf S_\phi(x) : \iff \forall \sigma\bleq x\, (\gopr_{\phi,(x)} \sigma \to \True_{\Sigma^0_1}(\sigma))$.
\end{itemize}

It is easily seen that, modulo (\eac)-provable equivalence, uniform $\phi$-smallness is $\Sigma^0_1$.
We will suppress the `uniform' in the rest of this paper, since we only consider the uniform case. 
  
Trivially, $\phi$-smallness is downwards closed.

If $\phi$ is consistent with $U$, then it is consistent with $U+\phi$ that not all numbers 
are small, since $U+\phi$ does not prove $\Sigma^0_1$-reflection for
$(U+\phi)$-provability. On the other hand, we have:

\begin{theorem}\label{kleinesmurf}
For every $n$, the theory $U+\phi$ proves that $n$ is $\phi$-small, i.e., 
$U+\phi\vdash \mf S_\phi(\underline n)$. Moreover, \eac\ verifies this insight, i.e.,
 $\eac \vdash \forall x\, \opr_\phi \mf S_\phi(x)$.
\end{theorem}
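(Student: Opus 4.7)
The plan is to establish the unformalized claim first — for each standard $n$, $U+\phi\vdash \mf S_\phi(\num n)$ — and then to upgrade to the formalized statement by observing that the construction is uniform in $x$.

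For the unformalized claim, fix a standard $n$. Because the $\Sigma^0_1$-sentences $\sigma$ with $\gnum \sigma \leq n$ are finitely many, it suffices to prove, for each such concrete $\sigma$, that
$U+\phi\vdash \gopr_{\phi,(\num n)}\gnum{\sigma} \to \True_{\Sigma^0_1}(\gnum{\sigma})$. I would split externally on whether $\gopr_{\phi,(n)}\gnum{\sigma}$ holds standardly. If it fails: the statement, with $n$ and $\gnum{\sigma}$ replaced by numerals, is $\Delta_0$, so \EA\ refutes it by $\Delta_0$-completeness and the implication holds vacuously. If it holds: unpacking the definition gives a concrete $\chi \leq n$ with $\phi \preceq_n \chi$ and $\opr_{\chi,(n)}\gnum{\sigma}$. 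The first supplies an explicit propositional derivation of $\chi$ from $\phi$ together with $U$-theorems whose $U$-proofs are bounded by $n$, hence $U+\phi \vdash \chi$; the second supplies a specific $(U+\chi)$-proof of $\sigma$, hence $U+\chi\vdash \sigma$. Combining, $U+\phi \vdash \sigma$, and since $\sigma$ is a standard $\Sigma^0_1$-sentence, $U+\phi\vdash\True_{\Sigma^0_1}(\gnum{\sigma})$. Conjoining over the finitely many $\sigma \leq n$ yields $U+\phi \vdash \mf S_\phi(\num n)$.

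For the formalized statement ${\sf EA}\vdash \forall x\,\opr_\phi \mf S_\phi(x)$, I would work inside \EA\ with free parameter $x$ and construct, elementarily in $x$, the code of a $(U+\phi)$-proof of $\mf S_\phi(x)$. The external case split cannot be iterated under $\forall x$, so it must be replaced by an internal appeal to bounded $\Sigma^0_1$-reflection for $U$, a schema provable in \EA\ via the $\Sigma^0_1$-partial truth predicate $\True_{\Sigma^0_1}$ and its Tarski biconditionals. That bounded reflection is then transferred to uniform $\phi$-provability through the structural description: every witness of $\gopr_{\phi,(x)}\sigma$ decomposes, by the definitions together with Theorem~\ref{bravesmurf}, into some $\chi \leq x$ with $\phi \preceq_x \chi$ and a $(U+\chi)$-proof of $\sigma$ of length $\leq x$, so inside $U+\phi$ one first derives $\chi$ propositionally from $\phi$ and small $U$-theorems and then reflects to obtain $\sigma$. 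The assembly of this proof is primitive recursive in $x$, giving the desired $\opr_\phi\mf S_\phi(x)$ uniformly.

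The main obstacle is precisely this uniformity step. The external case distinction on the truth value of $\gopr_{\phi,(n)}\sigma$ is harmless for each fixed standard $n$ but is not available under $\forall x$; replacing it by an honest internal bounded-$\Sigma^0_1$-reflection argument — and wiring that reflection correctly to the $\preceq_x$-decomposition so that the resulting proof of $\mf S_\phi(x)$ is genuinely elementary in $x$ — is where the technical bookkeeping lives.
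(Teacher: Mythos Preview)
Your argument is correct, but it takes a detour that the paper avoids. For the unformalized claim you split externally on the truth of the bounded sentence $\gopr_{\phi,(n)}\sigma$, then for the formalized claim you observe that this split does not survive under $\forall x$ and replace it by internal bounded $\Sigma^0_1$-reflection. The paper simply runs the internal argument from the start: working in $U+\phi$, from $\gopr_{\phi,(\num n)}\sigma$ it passes via Theorem~\ref{mageresmurf}(2) to $\opr_{\phi,(F\num n)}\sigma$ and then invokes small reflection to obtain $\True_{\Sigma^0_1}(\sigma)$. Since small reflection is itself {\sf EA}-verifiable, this single argument already gives both the unformalized and formalized statements with no rework. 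Your external case split buys nothing and forces you to redo the proof for the uniform version.

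Two small corrections. First, where you write ``by the definitions together with Theorem~\ref{bravesmurf}'', you mean the bare definition of $\gopr_{\phi,(x)}$ (or Theorem~\ref{mageresmurf}); Theorem~\ref{bravesmurf} concerns transfer along $U$-provable implications between subscripts and plays no role here. Second, your decomposition route through an intermediate $\chi$ requires reflecting on $(U+\chi)$-proofs with $\chi$ a variable; this can be made to work, but it is strictly more bookkeeping than converting $\gopr_{\phi,(x)}$ to $\opr_{\phi,(Fx)}$ once and reflecting for $U+\phi$ directly.
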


The principle articulated in the formalised part of the theorem is a typical example of an \emph{outside-big-inside-small principle}. 
Objects that may be  very big in the outer world are  small in the inner world.

We remind the reader that the small reflection principles tells us that, for every $n$, a theory $V$ proves $\opr_{V,(n)} \psi \to \psi$.
The formalised version of small reflection is internally verifiable in extensions of \eac. See \cite{verb:small94} for a study of small reflection
in the context of weak theories.

\begin{proof}
Consider any number $n$. We work in $U + \varphi$. Suppose $\sigma \leq \num n$ and $\gopr_{\phi, (\num n)}\sigma$.
Then $\sigma$ is standard (i.o.w., we can replace the bounded existential quantifier by a big disjunction),  and
 $\opr_{\phi, (F(\num n))}\sigma$. So, by small reflection, we have $\sigma$, and, hence, $\True_{\Sigma^0_1}(\sigma)$.

This simple argument can clearly be verified in \eac.
\end{proof}

\begin{theorem}\label{oppervlakkigesmurf}
\begin{enumerate}[1.]
\item
Suppose $U\vdash \phi \to \psi$. Then, $U +\phi \vdash \forall x\, ({\mf S}_\phi(x) \to {\mf S}_\psi(x))$. 
\item
Suppose $U\vdash \phi \iff \psi$. Then, $U +\phi \vdash \forall x\, ({\mf S}_\phi(x) \iff {\mf S}_\psi(x))$. 
\end{enumerate}

\smallskip\noindent
These results can be verified in \eac.
\end{theorem}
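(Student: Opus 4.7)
The plan is to reduce everything to Theorem~\ref{bravesmurf}(i), Theorem~\ref{kleinesmurf}, and the routine monotonicity of $\gopr_{\cdot,(\cdot)}$ in its numerical bound (which follows directly from the definitions of $\preceq_x$ and $\opr_{\chi,(x)}$). For part~(1), I would take a standard $U$-proof $p$ of $\phi \to \psi$ and reason inside $U+\phi$: fix $x$, assume ${\mf S}_\phi(x)$, and pick an arbitrary $\sigma \bleq x$ with $\gopr_{\psi,(x)}\sigma$; the goal is to derive $\True_{\Sigma^0_1}(\sigma)$. If $x \bgeq \num p$, Theorem~\ref{bravesmurf}(i) immediately converts $\gopr_{\psi,(x)}\sigma$ into $\gopr_{\phi,(x)}\sigma$, and ${\mf S}_\phi(x)$ closes the case.

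The interesting case is $x \bles \num p$, where Theorem~\ref{bravesmurf}(i) is not yet available at the given bound. My strategy is to bump the bound up to $\num p$: monotonicity yields $\gopr_{\psi,(\num p)}\sigma$, and then Theorem~\ref{bravesmurf}(i) applied at $\num p$ gives $\gopr_{\phi,(\num p)}\sigma$. The key move is now to invoke Theorem~\ref{kleinesmurf}: since $\num p$ is a standard numeral, $U + \phi \vdash {\mf S}_\phi(\num p)$, and together with $\sigma \bleq x \bles \num p$ this delivers $\True_{\Sigma^0_1}(\sigma)$. Part~(2) then follows by applying part~(1) to both directions of $\phi \iff \psi$ and transporting the second conclusion inside $U + \phi$ using $U \vdash \phi \iff \psi$.

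For the {\sf EA}-verification, the same argument goes through verbatim with $p$ replaced by an {\sf EA}-variable witnessing $\Proof_U(p, \gnum{\phi \to \psi})$; one uses the formalised forms of Theorem~\ref{bravesmurf}(i) and of Theorem~\ref{kleinesmurf} (the latter in the shape ${\sf EA} \vdash \forall x\, \opr_\phi {\mf S}_\phi(x)$, instantiated at the variable~$p$ in the small-$x$ sub-case). The main, rather mild, obstacle is just bookkeeping: stating and using the monotonicity lemma for $\gopr_{\cdot,(\cdot)}$ and keeping the case split $x \bles p$ versus $x \bgeq p$ clean inside the formal $U+\phi$-derivation that {\sf EA} builds.
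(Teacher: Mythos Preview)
Your proposal is correct and follows the same overall architecture as the paper: fix a $U$-proof $p$ of $\phi\to\psi$, reason in $U+\phi$, and split on $x\bgeq\num p$ versus $x\bles\num p$, invoking Theorem~\ref{bravesmurf} in the large case and Theorem~\ref{kleinesmurf} in the small case.

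The only difference is in the small-$x$ case. The paper observes that if $x<\num p$ then $x$ is standard, and since $U+\phi\vdash\psi$ we may apply Theorem~\ref{kleinesmurf} for $\psi$ to conclude $\mf S_\psi(x)$ outright, without ever touching the hypothesis $\mf S_\phi(x)$ or Theorem~\ref{bravesmurf}. Your route---bump the bound to $\num p$, apply Theorem~\ref{bravesmurf}(i) there, and then use $\mf S_\phi(\num p)$ from Theorem~\ref{kleinesmurf}---also works, but is a detour: you are manufacturing a $\gopr_{\phi,(\num p)}\sigma$ only to discharge it via $\phi$-smallness of the standard number $\num p$, whereas the paper just reads off $\psi$-smallness of the standard number $x$ directly. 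Your handling of part~(2) and of the {\sf EA}-verification is fine and matches what the paper leaves implicit.
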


\begin{proof}
Ad (1): Suppose $U\vdash \phi \to \psi$. Let $p$ a code of a
  $U$-proof of $( \phi \to \psi)$. 
  We reason in $U+\phi$. 
  
  Suppose $x < \num p$.
In that case, we have both  ${\mf S}_\phi(x)$ and ${\mf S}_\psi(x)$, since $x$ is (externally) standard and we have both $\phi$ and $\psi$ and, thus,
we may apply Theorem~\ref{kleinesmurf}.

Suppose $x \geq \num p$. In this case, we are immediately done  by Theorem~\ref{bravesmurf}.

Case (2) is similar. 
\end{proof}

\subsection{Uniform Slow Provability}\label{sluwesmurf}
This section gives the main argument of this section. The argument is an adaptation of the argument in
\cite{viss:abso21}. Again $U$ will be our base theory as introduced in Section~\ref{prinsbsmurf}.

We define the \emph{uniform slow $(U+\phi)$-provability} of $\psi$ or $\apr_\phi \psi$ as: $\psi$ is provable from $\phi$-small $(U+\phi)$-axioms. 
We give the formal definition.
\begin{itemize}
\item
$\opr_{\phi,x}\psi$ iff there is a proof of $\psi$ from $(U+\phi)$-axioms with G\"odel numbers that are less than or equal to $x$.
Since our axiom set is elementary, the set of axioms less than or equal to $x$ is also elementary with parameter $x$.
\item
$ \apr_\phi \psi :\iff \exists x\, (\opr_{\phi,x}\psi \wedge {\mf S}_\phi(x))$.
\item 
$\aco_\phi\psi :\iff \neg\,\apr_\phi\neg\,\psi$.
\end{itemize}

We note that, in case $\phi$ is consistent with $U$ the internally defined set of small axioms numerates the $(U+\phi)$-axioms over $U+\phi$.
We remind the reader that $\alpha$ is the elementary representation of the given axiom set of $U$.
Let \[\beta(x) := ((\alpha(x)\vee x=\gnum\phi) \wedge \mf S_\phi(x)).\] Then, $\apr_\phi \chi$ is $\opr_{\tupel\beta}\chi$. 
Thus, $\apr_\phi$ is a Fefermanian predicate based on an enumeration of the axioms of
$U+\phi$. 
Clearly,
$\apr_\phi$ is $\Sigma_1^0$ modulo (\eac)-provable equivalence.  Since $U$ extends \eac, we also have this result $U$-internally.

\begin{remark}
\emph{Caveat emptor:}
 $\apr_\phi\psi$ is \emph{not} the same as $\apr_\top(\phi \to \psi)$.
 See Appendix~\ref{gastsmurf}.
 \end{remark}

 \begin{remark}
 The notion $\apr_\phi\psi$ is a kind of implication from $\phi$ to $\psi$. 
     In Appendix~\ref{robinhoodsmurf}, we briefly consider an arrow notation 
     for $\apr_\phi\psi$, to wit $\phi\tto\psi$, to get a sense of the heuristics provided by that notation.
 \end{remark}

\begin{theorem}\label{loebsmurf}
$\apr_\phi$ satisfies the L\"ob Conditions over $U+\phi$.
This result can be verified in \eac.
\end{theorem}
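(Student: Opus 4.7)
The plan is to verify the three standard Löb Conditions for $\apr_\phi$ over $U+\phi$:
necessitation (L1: if $U+\phi \vdash \chi$, then $U+\phi \vdash \apr_\phi \chi$);
distributivity (L2: $U+\phi \vdash \apr_\phi(\chi \to \theta) \to (\apr_\phi \chi \to \apr_\phi \theta)$);
and provable $\Sigma^0_1$-completeness (L3: $U+\phi \vdash \apr_\phi \chi \to \apr_\phi \apr_\phi \chi$).
The overall strategy leans on two facts already in hand: $\apr_\phi$ is Fefermanian on the $\Sigma^0_1$-enumeration $\beta(x) := (\alpha(x) \vee x = \gnum\phi) \wedge \mf S_\phi(x)$, and Theorem~\ref{kleinesmurf} supplies the outside-big-inside-small principle $U+\phi \vdash \mf S_\phi(\num n)$ for each standard $n$.

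For L1, I would observe that a standard $(U+\phi)$-proof of $\chi$ uses axioms bounded by some standard $n$, so $\opr_{\phi,\num n}\chi$ holds (and is provably so). Theorem~\ref{kleinesmurf} gives $U+\phi \vdash \mf S_\phi(\num n)$, and conjoining the two inside $U+\phi$ yields $\apr_\phi \chi$. For L2, I would reason inside $U+\phi$: from witnesses $x$ for $\apr_\phi(\chi \to \theta)$ and $y$ for $\apr_\phi \chi$, set $z := \max(x,y)$. The Modus Ponens combination of the two component proofs has axioms bounded by $z$. Since $\gopr_{\phi,(u)}\sigma$ is monotone in $u$, smallness is downward closed; so as $z$ equals whichever of $x,y$ is larger, and both are assumed small, $z$ is small. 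Hence $\apr_\phi \theta$.

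For L3, I fix $\chi$. The sentence $\apr_\phi \chi$ is, modulo $U$-provable equivalence, $\Sigma^0_1$, as noted in the subsection preamble. Formalized $\Sigma^0_1$-completeness for $U+\phi$, available already in {\sf EA}, delivers a standard bound $n$ (elementary in $\gnum{\apr_\phi \chi}$) with $U+\phi \vdash \apr_\phi \chi \to \opr_{\phi,\num n}\apr_\phi \chi$. Theorem~\ref{kleinesmurf} again gives $U+\phi \vdash \mf S_\phi(\num n)$, and the conjunction yields $\apr_\phi \apr_\phi \chi$.

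Each of these arguments is elementary — combining proofs, downward closure and max-closure of smallness, and $\Sigma^0_1$-completeness at specific standard bounds — so the whole verification formalizes in {\sf EA}. I expect L3 to be the main technical obstacle: two roles of smallness must be coordinated — the outer $\apr_\phi$ demands a small witness, while the natural ``proof of a proof'' comes from $\Sigma^0_1$-completeness with a specific standard bound — and Theorem~\ref{kleinesmurf} is precisely the bridge that says standard bounds are provably small inside $U+\phi$.
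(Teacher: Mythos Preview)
Your proof is correct and follows essentially the same route as the paper: both derive L1 from Theorem~\ref{kleinesmurf}, treat L2 as immediate from combining witnesses (the paper literally says ``it is immediate''), and obtain L3 via $\Sigma^0_1$-completeness together with the provable smallness of standard bounds. The only presentational difference is that for L3 the paper explicitly passes through a $\Sigma^0_1$-equivalent $\sigma$ and necessitates the equivalence $\apr_\phi\psi \iff \sigma$ under $\apr_\phi$, whereas you fold this step into your claim of a standard axiom bound $\num n$.
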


\begin{proof}
By Theorem~\ref{kleinesmurf}, all standard axioms of $U+\phi$ are $(U+\phi)$-provably small.
It follows that we have L\"ob's Rule, also known as Necessitation.

We note that, by Necessitation, we have \eac\ inside $\apr_\phi$ according to 
$U+\phi$.

It is immediate that $U+\phi \vdash (\apr_\phi\psi \wedge \apr_\phi (\psi\to\chi)) \to \apr_\phi\chi$.

Clearly, $U+\phi$ proves that $\apr_\phi\psi$ is equivalent to a $\Sigma^0_1$-sentence, say $\sigma$.
Moreover, by necessitation, we have $U+\phi \vdash \apr_\phi(\apr_\phi\psi \iff \sigma)$. 
It follows that:\qedright
\begin{eqnarray*}
U+ \phi \vdash \apr_\phi\psi & \to & \sigma\\
& \to & \apr_\phi \sigma \\
& \to & \apr_\phi \apr_\phi \psi.
\end{eqnarray*}
 \end{proof}
 
 \begin{remark}\label{hacksmurf}
Suppose we arrange that every proof is larger than an induction axiom that implies \eac.
Then, the axiom-set for $\apr_\phi$ will \eac-verifiably contain an axiom that implies \eac.

So, we will have L\"ob's Rule over \eac. Note that we do not need to have L\"ob's Rule
over $U$ itself.

Also, the axiom set will be $\Sigma^0_1$ modulo provable equivalence in the $\apr_\phi$-theory. This will give us full L\"ob's logic
over \eac.
\end{remark}

\begin{theorem}\label{extensionalitysmurf}
Suppose $U \vdash \phi \iff \psi$. Then, $U+\phi \vdash \forall \chi\, (\apr_{\phi}\chi \iff \apr_\psi \chi)$.
This result can be verified in \eac.
\end{theorem}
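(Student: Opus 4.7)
The plan is to establish $U + \phi \vdash \apr_\phi \chi \to \apr_\psi \chi$; the converse follows by symmetry. Fix the code $p$ of a $U$-proof of $\phi \iff \psi$, so that by monotonicity of coding both $\gnum{\phi}$ and $\gnum{\psi}$, as well as every axiom used in that proof, have codes strictly less than $p$.

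Reasoning inside $U + \phi$, assume $\apr_\phi \chi$ is witnessed by some $x$ with $\opr_{\phi,x}\chi$ and $\mf S_\phi(x)$. A routine proof transformation---splicing in the given $U$-proof of $\psi \to \phi$ at each application of $\phi$ as an axiom---converts any $(U+\phi)$-proof of $\chi$ using axioms bounded by $x$ into a $(U+\psi)$-proof of $\chi$ whose axioms all have codes at most $\max(x, \num p)$, since the new axioms inherited from the proof of $\phi \iff \psi$ have codes below $\num p$.

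Now split on whether $x \geq \num p$. If so, take $x' := x$: then $\opr_{\psi, x}\chi$ directly, and Theorem~\ref{oppervlakkigesmurf}(2), combined with the standing assumption $\phi$, lifts $\mf S_\phi(x)$ to $\mf S_\psi(x)$. If $x < \num p$, take $x' := \num p$: the converted proof witnesses $\opr_{\psi, \num p}\chi$, while $\mf S_\psi(\num p)$ is immediate from $\psi$ via Theorem~\ref{kleinesmurf}. Either way, $\apr_\psi \chi$ holds.

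The main obstacle is to certify uniformity: the above recipe for producing, from the standard proof $p$, a $(U + \phi)$-proof of $\apr_\phi\chi \to \apr_\psi\chi$ must itself be elementary in $\phi$, $\psi$, and $p$, so that the {\sf EA}-verifiable strengthening goes through. This is routine but not wholly vacuous---proof splicing is elementary, the bounded case split on $x$ versus $\num p$ is available in {\sf EA}, and the supporting Theorems~\ref{kleinesmurf} and~\ref{oppervlakkigesmurf} were already formulated with {\sf EA}-verifiable content, so the formalization carries through without surprises.
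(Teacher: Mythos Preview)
Your proof is correct and follows essentially the same route as the paper's: splice the $U$-proof of $\phi\iff\psi$ into the given $\apr_\phi$-witness to obtain a $(U+\psi)$-proof, and then verify that the resulting axiom bound is $\psi$-small by appealing to Theorems~\ref{kleinesmurf} and~\ref{oppervlakkigesmurf}. The only cosmetic difference is that you make the case split on $x$ versus $\num p$ explicit, whereas the paper absorbs it into the observation that all newly introduced axioms are standard and hence automatically $\psi$-small.
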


We remind the reader that we assume monotonicity of coding.

\begin{proof}
We note that $\psi$ itself is, $(U+\psi)$-provably, a $\psi$-small axiom, by Theorem~\ref{kleinesmurf}.
So, (a) $U+\psi \vdash \apr_\psi\psi$.
Let $p$ be (a code of) a $U$-proof of  $ \phi \iff \psi$. 
We note that all axioms in $p$ are standard. So, again by Theorem~\ref{kleinesmurf}, we find (b)  $U+\psi\vdash \apr_\psi(\phi\iff \psi)$.
Combining, (a) and (b), we find $U+\psi\vdash \apr_\psi\phi$.

We reason in $U+\phi$. Since, we have $\phi$ and $\phi \iff \psi$, we may conclude $\psi$. So, we have $\apr_\psi\phi$.
Consider any $\chi$ and suppose $q$ witnesses that $\apr_\phi\chi$. Thus, $q$ is  a proof of $\chi$ from $\phi$ plus $\phi$-small $U$-axioms. 
Let $r$ be a witness of $\apr_\psi\phi$.
Thus, we can transform $q$ to a witness $q'$ of $\apr_\psi\chi$, by replacing $\phi$ as an axiom by the
proof $r$. We note that $\phi$-small $U$-axioms used in $q$ are also $\psi$-small  by Theorem~\ref{oppervlakkigesmurf}.
 It follows that $\apr_\psi\chi$.

The other direction is similar.
\end{proof}

The next two theorems give the salient provability principles that our development delivers for the
combination of $\opr$ and $\apr$.

\begin{theorem}[Emission]\label{emission}
$\eac\vdash \opr_\phi \psi \to \opr_\phi \apr_\phi \psi$.  
\end{theorem}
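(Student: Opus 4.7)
The plan is to argue inside ${\sf EA}$, turning a $(U+\phi)$-proof of $\psi$ into a $(U+\phi)$-provable witness for $\apr_\phi\psi$. Suppose $\opr_\phi\psi$; then there is some $p$ such that $\opr_{\phi,p}\psi$ (a $(U+\phi)$-proof of $\psi$ from axioms $\leq p$). The crucial observation is that both of the following facts, about this specific $p$, can be made provable inside $U+\phi$: (i) the $\Sigma^0_1$-statement $\opr_{\phi,\dot p}\psi$, and (ii) the $\phi$-smallness $\mf S_\phi(\dot p)$.

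For (i), I would invoke formalized $\Sigma^0_1$-completeness: since $\opr_{\phi,x}\psi$ is $\Sigma^0_1$, ${\sf EA}$ proves $\opr_{\phi,x}\psi \to \opr_\phi \opr_{\phi,\dot x}\psi$, and instantiating at our witness $p$ gives $\opr_\phi \opr_{\phi,\dot p}\psi$. For (ii), I would appeal directly to the formalized part of Theorem \ref{kleinesmurf}, which reads ${\sf EA}\vdash \forall x\,\opr_\phi \mf S_\phi(x)$; instantiating at $p$ yields $\opr_\phi \mf S_\phi(\dot p)$.

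Combining these two boxed conclusions under the single box of $U+\phi$-provability (using the distribution of $\opr_\phi$ over conjunction), we obtain $\opr_\phi(\opr_{\phi,\dot p}\psi \wedge \mf S_\phi(\dot p))$. Since $\apr_\phi\psi$ is by definition $\exists x\,(\opr_{\phi,x}\psi \wedge \mf S_\phi(x))$, one step of existential generalisation inside the box yields $\opr_\phi\apr_\phi\psi$, as required. The whole argument is carried out inside ${\sf EA}$ from the initial hypothesis $\opr_\phi\psi$.

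There is no real obstacle here; the only thing to be a little careful about is the bookkeeping with numerals, i.e., that ${\sf EA}$ really can pass from the external witness $p$ to the numeral $\dot p$ appearing inside the $\opr_\phi$, which is the standard content of formalized $\Sigma^0_1$-completeness. The theorem is essentially the outside-big-inside-small principle of Theorem \ref{kleinesmurf} packaged together with provable $\Sigma^0_1$-completeness, and it supplies the emission half of the L\"ob conditions complementing Theorem \ref{loebsmurf}.
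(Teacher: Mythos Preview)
Your proof is correct and follows essentially the same route as the paper's: extract a witness $p$ from $\opr_\phi\psi$, use formalized $\Sigma^0_1$-completeness to get $\opr_\phi\opr_{\phi,\dot p}\psi$, invoke Theorem~\ref{kleinesmurf} for $\opr_\phi\mf S_\phi(\dot p)$, combine under the box, and existentially generalise. The paper's proof is terser but identical in structure.
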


\begin{proof}
We reason in \eac. Suppose $\opr_\phi\psi$. 
Then, clearly, for some $x$, we have $\opr_{\phi,x} \psi$.
Hence, $\opr_\phi \opr_{\phi,x}\psi$. Also, Theorem~\ref{kleinesmurf} gives us $\opr_\phi\mf S_\phi(x)$.
So, we have $\opr_\phi (\opr_{\phi,x}\psi \wedge \mf S_\phi(x))$ and, thus, $\opr_\phi\apr_\phi \psi$.
\end{proof}

 \begin{theorem}[Absorption]\label{absorption}
$\eac \vdash \opr_\phi\apr_\phi \psi \to \opr_\phi \psi$.  
 \end{theorem}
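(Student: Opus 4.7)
The plan is to unfold $\apr_\phi\psi = \exists x\, (\opr_{\phi,x}\psi \wedge \mf{S}_\phi(x))$, introduce a witnessing proof $p$ of $\apr_\phi\psi$, and combine the outside-big-inside-small principle (Theorem~\ref{kleinesmurf}) with formalized $\Sigma^0_1$-completeness to produce a $(U+\phi)$-proof of $\psi$. I would reason in {\sf EA} throughout, so the whole argument must be formalizable there.

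As a first pass, since $\opr_{\phi,x}\psi$ trivially yields $\opr_\phi\psi$ via existential generalization on the proof bound, the implication $\apr_\phi\psi \to \opr_\phi\psi$ is ${\sf EA}$-provable. Necessitation gives $\opr_\phi(\apr_\phi\psi \to \opr_\phi\psi)$, so combining with $\opr_\phi\apr_\phi\psi$ yields $\opr_\phi\opr_\phi\psi$. This already uses the hypothesis but doubles the modality; collapsing back to a single $\opr_\phi$ via generic $\Sigma^0_1$-reflection is not available, so the direct approach does not suffice.

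To close the gap I would work with an explicit witness $p$ satisfying $\Proof_\phi(p, \gnum{\apr_\phi\psi})$. By formalized $\Sigma^0_1$-completeness we have $\opr_\phi \Proof_\phi(\underline p, \gnum{\apr_\phi\psi})$, which via Theorem~\ref{mageresmurf} gives $\opr_\phi\gopr_{\phi,(\underline p)}\apr_\phi\psi$ (after arranging $p$ to be large enough). Theorem~\ref{kleinesmurf} simultaneously furnishes $\opr_\phi\mf{S}_\phi(\underline p)$. Using the monotonic coding convention, $\gnum{\apr_\phi\psi}\leq p$, so inside $\opr_\phi$ the reflection clause in $\mf{S}_\phi(\underline p)$ applies to the $\Sigma^0_1$-sentence $\apr_\phi\psi$ and yields $\True_{\Sigma^0_1}(\gnum{\apr_\phi\psi})$, hence $\apr_\phi\psi$, and then $\opr_\phi\psi$ via the trivial inclusion.

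The main obstacle is that this internal derivation still produces $\opr_\phi\opr_\phi\psi$ rather than the desired $\opr_\phi\psi$. The essential move---and where I expect the real work to lie---is to avoid the extra modal layer by splicing the witness $p$ directly with the canonical proof of $\mf{S}_\phi(\underline p)$ provided by Theorem~\ref{kleinesmurf}, so that both ingredients are assembled \emph{externally} into a single $(U+\phi)$-proof of $\psi$. The key feature to exploit is that $p$ plays a dual role: it is both the outer proof of $\apr_\phi\psi$ and a bound at which smallness (as proved $\opr_\phi$-provably and uniformly in $x$) validates the reflection needed to extract $\psi$. Arranging this dual use carefully is the delicate point, and it is precisely what sidesteps the unavailable $\Sigma^0_1$-reflection for $\opr_\phi$.
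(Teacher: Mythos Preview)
Your diagnosis of the obstacle is correct: the naive routes lead to $\opr_\phi\opr_\phi\psi$, and generic $\Sigma^0_1$-reflection is unavailable to collapse this. But the proposed ``external splicing'' remedy does not work, and there is a genuine missing idea.

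The problem is that a proof $p$ of $\apr_\phi\psi$ carries no accessible witness for the inner existential. The sentence $\apr_\phi\psi$ asserts $\exists x\,(\opr_{\phi,x}\psi \wedge \mf S_\phi(x))$, and $p$ may establish this purely existentially; in {\sf EA} you cannot extract from $p$ any concrete $x$ or any proof of $\psi$. Invoking $\mf S_\phi(\underline p)$ inside $\opr_\phi$ only lets you reflect on $\Sigma^0_1$-sentences that are $\gopr_{\phi,(p)}$-provable, and the best such sentence you have is $\apr_\phi\psi$ itself, which you already possess. The ``dual role of $p$'' you describe is a loop, not a reduction: it returns $\apr_\phi\psi$ and hence $\opr_\phi\psi$ \emph{inside} $\opr_\phi$, i.e., $\opr_\phi\opr_\phi\psi$ again.

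The paper's argument uses an ingredient absent from your proposal: a self-referential fixed point in the style of Shepherdson/Friedman--Goldfarb--Harrington. One takes $\nu$ with ${\sf EA}\vdash \nu \iff (\exists x\,\opr_{\phi,x}\psi) < \opr_\phi\nu$. Inside $\opr_\phi$, from $\apr_\phi\psi$ one obtains $x$ with $\opr_{\phi,x}\psi$ and $\mf S_\phi(x)$; a two-case split on whether $\opr_{\phi,(x)}\nu$ holds yields $\nu$ either by the fixed-point equation or by $\phi$-smallness reflection. Thus $\opr_\phi\nu$, and now externally one takes a witness $p$ for this and pushes $\opr_{\phi,(p)}\nu$ back inside; the fixed-point equation then forces $\opr_{\phi,p}\psi$ inside $\opr_\phi$. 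The final step uses that $U+\phi$ is, {\sf EA}-verifiably, \emph{essentially reflexive} to collapse $\opr_\phi\opr_{\phi,p}\psi$ to $\opr_\phi\psi$. Both the fixed point and essential reflexivity are essential and neither appears in your sketch.
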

 
 The proof turns out to be remarkably simple. 
 
 \begin{proof}
We find $\nu$ such that $\eac \vdash \nu \iff  (\exists x \, \opr_{\phi,x} \psi ) < \opr_\phi \nu$.
 We note that  $\nu$ is $\Sigma^0_1$.

We reason in \eac. 
Suppose $\opr_\phi\apr_\phi \psi$. We prove $\opr_\phi \psi$.

We reason inside $\opr_\phi$. We have  $\apr_\phi \psi$.
 So, for some $x$, (i) $\opr_{\phi,x} \psi$ and (ii) $x$ is $\phi$-small.
In case not $\opr_{\phi,(x)} \nu$, by (i) and the fixed point equation, we find $\nu$. 
If we do have $\opr_{\phi,(x)} \nu$, we find $\nu$ by (ii)
and the fact that $\phi$-small proofs are $\Sigma^0_1$-reflecting
(by Theorem~\ref{mageresmurf}(1)).
 We leave the $\opr_\phi$-environment.
We have shown $\opr_\phi \nu$. 

It follows, (a) that for some $p$, we have $\opr_\phi \opr_{\phi,(p)} \nu$ and, by the fixed point equation 
for $\nu$, (b) $\opr_\phi ((\exists x\, \opr_{\phi,x} \psi) < \opr_\phi \nu)$. 
Combining (a) and (b), we find $\opr_\phi\opr_{\phi,p} \psi$, and, thus,
since $U+\phi$ is, (\eac)-verifiably, essentially reflexive, we obtain  $\opr_{\phi} \psi$, as desired. 
\end{proof}

\noindent
We provide some notes on the proof of Theorem~\ref{absorption} in Appendix~\ref{emisabso}.

\begin{theorem}\label{maintheorem}
Suppose $U$ is, $(\eac)$-verifiably, an extension of {\sf PA}. We have:
\begin{description}
\item[\strind] If $U+ \phi$ is consistent, then, $U+ \phi+ \aco_\phi \top$ is consistent and
$U+\phi + \apr_\phi\bot$ is $\Pi^0_1$-conservative over $U+\phi$.
\item[Conditional Extensionality] If $U \vdash \phi\iff \psi$, then  $U+ \phi \vdash \aco_\phi\top \iff \aco_\psi\top$.
\end{description}

\smallskip\noindent
These results can be verified in \eac.
\end{theorem}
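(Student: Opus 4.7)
The plan is to handle Conditional Extensionality by a one-line instantiation of Theorem~\ref{extensionalitysmurf}, and to split Strong Independence into its two clauses: the consistency half dispatched by the Absorption Theorem, and the $\Pi^0_1$-conservativity half by a Kreisel-style twisted G\"odel~2 argument. All three pieces inherit {\sf EA}-verifiability from the riders on Theorems~\ref{extensionalitysmurf}, \ref{absorption}, and~\ref{loebsmurf}, so the concluding {\sf EA}-verifiability claim costs no extra work.

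For Conditional Extensionality, assume $U \vdash \phi \iff \psi$. Theorem~\ref{extensionalitysmurf} gives $U+\phi \vdash \forall \chi\,(\apr_\phi\chi \iff \apr_\psi\chi)$, and setting $\chi := \bot$ and taking contrapositives yields $U+\phi \vdash \aco_\phi\top \iff \aco_\psi\top$.

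For the consistency of $U+\phi+\aco_\phi\top$, I argue contrapositively: if $U+\phi \vdash \apr_\phi\bot$, then this very derivation witnesses $\opr_\phi\apr_\phi\bot$ in the standard model, and Absorption (Theorem~\ref{absorption}), read as a true arithmetical statement, forces $\opr_\phi\bot$, contradicting the consistency of $U+\phi$. For the $\Pi^0_1$-conservativity of $U+\phi+\apr_\phi\bot$ over $U+\phi$, I imitate the proof of Theorem~\ref{kreiselsmurf} with $\apr_\phi$ replacing the Fefermanian predicate. Given a $\Pi^0_1$-sentence $\pi$ with $U+\phi+\apr_\phi\bot \vdash \pi$, let $\sigma$ be a $\Sigma^0_1$-sentence {\sf EA}-equivalent to $\neg\,\pi$. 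Then $U+\phi+\sigma \vdash \aco_\phi\top$. The formalized $\Sigma^0_1$-completeness for $\apr_\phi$ bundled into Theorem~\ref{loebsmurf} gives $U+\phi+\sigma \vdash \apr_\phi\sigma$; combining this with a putative $\apr_\phi\neg\,\sigma$ by distribution would produce $\apr_\phi\bot$, against $\aco_\phi\top$. Hence $U+\phi+\sigma \vdash \aco_\phi\sigma$, which rewrites as $U+\phi+\sigma \vdash \neg\,\apr_\phi(\sigma\to\bot)$. The twisted predicate $\apr'(\chi) := \apr_\phi(\sigma\to\chi)$ inherits the L\"ob conditions from $\apr_\phi$ and serves as a provability predicate for $U+\phi+\sigma$, so G\"odel's Second Theorem forces $U+\phi+\sigma$ to be inconsistent, and therefore $U+\phi \vdash \pi$.

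The main obstacle, beyond assembling these ingredients, is checking that the twisted predicate $\apr'$ genuinely satisfies the three L\"ob conditions over the enlarged base $U+\phi+\sigma$: necessitation and distribution are routine propositional corollaries of the corresponding conditions for $\apr_\phi$, while formalized transitivity for $\apr'$ reduces to formalized transitivity for $\apr_\phi$ together with the trivial fact that $\apr_\phi(\sigma\to\chi)$ implies $\sigma\to\apr_\phi(\sigma\to\chi)$ inside another $\apr_\phi$. Once this verification is in place, the rest of the argument follows the Kreisel/L\"ob template from Theorem~\ref{kreiselsmurf}, and {\sf EA}-verifiability propagates clause by clause.
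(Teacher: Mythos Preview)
Your proof is correct and follows essentially the same route as the paper. The paper handles Conditional Extensionality and the consistency clause exactly as you do, and for $\Pi^0_1$-conservativity it simply invokes Theorem~\ref{kreiselsmurf}; your twisted-predicate argument is precisely the content of that invocation unpacked for $\apr_\phi$ (and is arguably more careful, since Theorem~\ref{kreiselsmurf} is stated for an \emph{elementary} axiom predicate while the axiom set underlying $\apr_\phi$ is only $\Sigma^0_1$, so the L\"ob conditions from Theorem~\ref{loebsmurf} are what actually do the work).
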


\begin{proof}
Suppose $U+ \phi$ is consistent. Then, 
$U+\phi \nvdash \apr_\phi\bot$, by the meta-version of Theorem~\ref{absorption}.
We  apply Theorem~\ref{kreiselsmurf} to obtain the $\Pi^0_1$-conservativity of $U+\phi+\neg\, \apr_\phi \bot$ over
$U+\phi$.\footnote{We remind the reader that $\apr+\phi$ is a $\Sigma_1$ Fefermanian provability predicate for $U+\phi$, so 
Theorem~\ref{kreiselsmurf} does indeed apply.}
Conditional Extensionality follows by Theorem~\ref{extensionalitysmurf}.
\end{proof}

\subsection{Density Revisited}\label{densesmurf}
As a corollary to Theorem \ref{maintheorem}, we obtain a strengthening of Shavrukov and 
Visser's result (Theorem \ref{thm:SV_density}) on the density of the Lindenbaum algebras. 
Let $U$ be a consistent c.e.~extension of $\PA$. 
Let  $\Gamma$ be an element of  $\{\Sigma_n, \Pi_n \mid n \geq 1\}$. 

\begin{cor}
Suppose $U$ is, $(\eac)$-verifiably, an extension of {\sf PA}. We write $[\phi]$ for $[\phi]_U$.
There exists a computable function $f$ satisfying the following conditions:
\begin{description}
	\item [Strong Density] If $[\varphi] < [\psi]$ and $\varphi, \psi \in \Gamma \supseteq \Pi_1$, then we 
	have $f(\varphi, \psi) \in \Gamma$, $[\varphi] < [f(\varphi, \psi)] < [\psi]$, and for any $\Sigma_1$-sentence 
	$\sigma$, if $[\sigma] \leq [f(\varphi, \psi)]$, then $[\sigma] \leq [\varphi]$. 
	\item [Extensionality] If $[\varphi] = [\varphi']$ and $[\psi] = [\psi']$, then $[f(\varphi, \psi)] = [f(\varphi', \psi')]$.  
\end{description}
\end{cor}
\begin{proof}
Let $\rho(x)$ be a $\Pi_1$-formula having the following properties:
\begin{description}
	\item [Strong independence] If $U + \varphi$ is consistent, then $U + \varphi \nvdash \neg\,  \rho(\ulcorner \varphi \urcorner)$ and 
	$U + \varphi + \neg\,  \rho(\ulcorner \varphi \urcorner)$ is $\Pi_1$-conservative over $U + \varphi$. 
	\item [Conditional extensionality] If $U \vdash \varphi \leftrightarrow \psi$, then $U + \varphi \vdash \rho(\ulcorner \varphi \urcorner) \leftrightarrow \rho(\ulcorner \psi \urcorner)$. 
\end{description}

Let $f(\varphi, \psi)$ be the sentence
\[
	\varphi \lor (\psi \land \rho(\ulcorner \psi \land \neg\,  \varphi \urcorner)). 
\]

Suppose $[\varphi] < [\psi]$ for $\Gamma$ sentences $\varphi$ and $\psi$, where $\Gamma \supseteq \Pi_1$. 
Then $f(\varphi, \psi)$ is also a $\Gamma$ sentence. 
It is shown $[\varphi] < [f(\varphi, \psi)] < [\psi]$ in the usual way because $\rho(\ulcorner \psi \land \neg\,  \varphi \urcorner)$ is independent over $U + \psi + \neg\,  \varphi$. 

Let $\sigma$ be any $\Sigma_1$ sentence such that $[\sigma] \leq [f(\varphi, \psi)]$. 
This means
\[
	U \vdash \sigma \to \varphi \lor (\psi \land \rho(\ulcorner \psi \land \neg\,  \varphi \urcorner)). \tag{*}
\]
Then, $U + \psi \land \neg\,  \varphi + \neg\,  \rho(\psi \land \neg\,  \varphi) \vdash \neg\,  \sigma$.
By the strong independence, $U + \psi \land \neg\,  \varphi \vdash \neg\,  \sigma$. 
We have $U \vdash \sigma \land \psi \to \varphi$. 
Since $U \vdash \sigma \land \neg\,  \psi \to \varphi$ by (*), we obtain $U \vdash \sigma \to \varphi$, and so $[\sigma] \leq [\varphi]$. 

Suppose $[\varphi] = [\varphi']$ and $[\psi] = [\psi']$. 
It suffices to show $U \vdash f(\varphi, \psi) \to f(\varphi', \psi')$. 
\begin{itemize}
	\item Since $U \vdash \varphi \leftrightarrow \varphi'$, we have $U \vdash \varphi \to f(\varphi', \psi')$. 
	\item Since $U + \psi \land \neg\,  \varphi \vdash \rho(\ulcorner \psi \land \neg\,  \varphi \urcorner) \to \rho(\ulcorner \psi' \land \neg\,  \varphi' \urcorner)$, 
	we have $U \vdash \psi \land \neg\,  \varphi \land  \rho(\ulcorner \psi \land \neg\,  \varphi \urcorner) \to f(\varphi', \psi')$. 
\end{itemize}
Therefore, we conclude $U \vdash f(\varphi, \psi) \to f(\varphi', \psi')$. 
\end{proof}

Extensionality is close to the limit of what can hold.
Indeed, one of the consequences of Montalb\'an and Walsh's result shows that computable density functions cannot be monotone. 

\begin{theorem}[Montalb\'an and Walsh \cite{Mont:inev19}]
There is no computable function $h$ satisfying the following properties. 
\begin{description}
\item[Density] If $[\varphi] < [\psi]$, then $[\varphi] < [h(\varphi,\psi)] < [\psi]$.
\item[Monotonicity] If $[\varphi] \le [\varphi']$ and $[\psi] \le [\psi']$, then $[h(\varphi,\psi)]\le [h(\varphi',\psi')]$.
\end{description}
\end{theorem}

We improve this theorem by modifying Hamkins' argument presented in Appendix \ref{gastsmurf}. 

\begin{theorem}\label{Lindenbaum}
There is no function $f$ \textup(not necessarily computable\textup) satisfying the following conditions: 
For any $\varphi, \varphi', \psi, \psi' \in \mathcal{B}(\Sigma_1)$,
\begin{description}
	\item [$\mathcal{B}(\Sigma_1)$-density] If $[\varphi] < [\psi]$, then we have $f(\varphi, \psi) \in \mathcal{B}(\Sigma_1)$ and $[\varphi] < [f(\varphi, \psi)] < [\psi]$. 
	\item [$\mathcal{B}(\Sigma_1)$-monotonicity] If $[\varphi] = [\varphi']$ and $[\psi] \leq [\psi']$, then $[f(\varphi, \psi)] \leq [f(\varphi', \psi')]$.  
\end{description}
\end{theorem}
\begin{proof}
Suppose that there were such a function $f$. 
Since $[\bot] < [\top]$, we have $f(\bot, \top) \in \mathcal{B}(\Sigma_1)$ and $[\bot] < [f(\bot, \top)] < [\top]$ by the density. 
We have \[[\bot] < [\neg\,  f(\bot, \top)],\] because $[f(\bot, \top)] < [\top]$.  
Then, by the density, $[\bot] < [f(\bot, \neg\,  f(\bot, \top))] < [\neg\,  f(\bot, \top)]$.

On the other hand, since $[\bot] = [\bot]$ and $[\neg\,  f(\bot, \top)] \leq [\top]$, we obtain 
\[[f(\bot, \neg\,  f(\bot, \top))] \leq [f(\bot, \top)],\]
by the monotonicity. 
By combining this with $[f(\bot, \neg\,  f(\bot, \top))] < [\neg\,  f(\bot, \top)]$, we have $[f(\bot, \neg\,  f(\bot, \top))] \leq [\bot]$. 
This contradicts $[\bot] < [f(\bot, \neg\,  f(\bot, \top))]$. 
\end{proof}

\section{On intensionally finite Extensions}\label{inerf}
In this section, we show that the effective if-essential incompleteness of $\EA$ is inherently intensional. 
Actually, we prove more. 

\begin{theorem}\label{if1}
    Let $U$ be any c.e.~theory. 
    Suppose that a computable function $\Phi$ satisfies the following condition: 
    \begin{itemize}
        \item [$(\ast)$] For any natural number $i$, if $\mathsf{W}_i$ is a consistent finite extension of $U$, 
        then $\Phi(i)$ converges to a sentence which is independent of $\mathsf{W}_i$. 
    \end{itemize}
    Then, for any natural number $i$, if $\mathsf{W}_i$ is a consistent finite extension of $U$, then there exists a 
    natural number $j$ such that $\mathsf{W}_i = \mathsf{W}_j$, $\mathsf{W}_i \nvdash \Phi(i) \to \Phi(j)$, and $\mathsf{W}_i \nvdash \Phi(j) \to \Phi(i)$. 
\end{theorem}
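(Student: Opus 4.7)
The approach is a Kleene Recursion Theorem construction. Set $\chi_i := \Phi(i)$, which by the hypothesis $(\ast)$ is a well-defined sentence independent of $\mathsf{W}_i$. I will argue by contradiction: assume that every $j$ with $\mathsf{W}_j = \mathsf{W}_i$ satisfies $\mathsf{W}_i \vdash \chi_i \to \Phi(j)$ or $\mathsf{W}_i \vdash \Phi(j) \to \chi_i$.

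By the Recursion Theorem, I construct an index $j$ whose associated c.e.~enumeration algorithm for $\mathsf{W}_j$ works as follows at each stage $s$: it enumerates the $s$-th axiom of $\mathsf{W}_i$; it dovetails a simulation of $\Phi(j)$; and once $\Phi(j)$ has converged to some $\chi_j$, it searches through $\mathsf{W}_i$-proofs of length at most $s$ for either $\chi_i \to \chi_j$ or $\chi_j \to \chi_i$. The decisive feature of the algorithm is a \emph{deviation rule}: if such a proof is ever found at some stage $s^\ast$, the algorithm additionally enumerates a marker sentence $\tau$, selected to be a $U$-theorem not occurring in the canonical enumeration of $\mathsf{W}_i$. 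Then $\mathsf{W}_j = \mathsf{W}_i \cup \{\tau\}$ still axiomatizes the same theory as $\mathsf{W}_i$, hence remains a consistent finite extension of $U$, while differing from $\mathsf{W}_i$ as a c.e.~set.

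The favorable case is when no proof is ever found. Then $\mathsf{W}_j = \mathsf{W}_i$ as c.e.~sets, so by $(\ast)$ the value $\Phi(j)$ converges; the eternally unsuccessful search certifies that $\mathsf{W}_i \nvdash \chi_i \to \Phi(j)$ and $\mathsf{W}_i \nvdash \Phi(j) \to \chi_i$. This contradicts the standing assumption, so the assumption is false and $j$ witnesses the theorem.

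The main obstacle is ruling out the \emph{deviation case}, in which the search succeeds, the marker is enumerated, and $\mathsf{W}_j \neq \mathsf{W}_i$. Here the assumption does not directly apply, since it only governs indices with $\mathsf{W}_j = \mathsf{W}_i$; moreover, $(\ast)$ applied to $j$ only tells us that $\Phi(j)$ is independent of the theory $\mathsf{W}_i$, which is compatible with the comparability that was found. I would resolve this by running a simultaneous double-Recursion Theorem with a mirror-image index $j'$ that searches for the opposite polarity of the implication, and by arranging the two deviation rules so that if both $\mathsf{W}_j$ and $\mathsf{W}_{j'}$ deviate from $\mathsf{W}_i$, the combined data on $\Phi(j), \Phi(j'), \chi_i$ yields a $\mathsf{W}_i$-provable implication chain that, together with $(\ast)$ applied to a plain padded index of $\mathsf{W}_i$, produces the final contradiction. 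The delicate point throughout is calibrating the marker $\tau$ so that deviation is genuinely detectable at the level of c.e.~indices while the underlying theory is preserved, which is precisely what distinguishes the \emph{intensionally} finite setting from the extensional one.
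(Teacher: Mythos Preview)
Your Recursion-Theorem skeleton is right, but the deviation rule is the wrong one, and the gap you yourself flag is real and not repaired by the double-recursion sketch.

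The point of the deviation is to make the deviating $\mathsf{W}_j$ \emph{self-defeating} under $(\ast)$, not merely distinguishable from $\mathsf{W}_i$. Adding a harmless $U$-theorem $\tau$ leaves $\mathsf{W}_j$ a consistent finite extension of $U$ with the same deductive closure as $\mathsf{W}_i$; then $(\ast)$ only tells you $\Phi(j)$ is independent of that theory, which, as you note, is perfectly compatible with $\mathsf{W}_i \vdash \Phi(i)\to\Phi(j)$ or the converse. Your proposed mirror-index $j'$ does not help: both implications are already being searched for by $j$, there is no ``opposite polarity'' to exploit, and no arrangement of harmless markers produces an implication chain that contradicts independence of $\Phi(i)$.

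The missing idea is a Rosser-style deviation: once $\Phi(j)$ has converged, race the two implications against each other, and when the search terminates add $\neg\,\Phi(j)$ if a $\mathsf{W}_i$-proof of $\Phi(j)\to\Phi(i)$ appears first, and add $\Phi(j)$ if a $\mathsf{W}_i$-proof of $\Phi(i)\to\Phi(j)$ appears first (or simultaneously). In the first branch $\mathsf{W}_j=\mathsf{W}_i+\neg\,\Phi(j)$ proves $\neg\,\Phi(j)$, so by $(\ast)$ it must be inconsistent, whence $\mathsf{W}_i\vdash\Phi(j)$; combined with the found implication this gives $\mathsf{W}_i\vdash\Phi(i)$, contradicting independence of $\Phi(i)$. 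The second branch is symmetric. Hence no deviation occurs, $\mathsf{W}_j=\mathsf{W}_i$, and neither implication is $\mathsf{W}_i$-provable. No double fixed point is needed.
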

\begin{proof}
Suppose $\Phi$ satisfies the condition $(\ast)$. 
Let $i$ be an index of a consistent finite extension of $U$.  
Then, $(\ast)$ implies that $\Phi(i)$ converges, $\mathsf{W}_i \nvdash \Phi(i)$, and $\mathsf{W}_i \nvdash \neg\, \Phi(i)$. 
By the usual Recursion Theorem, we find a natural number $j$ satisfying the following equation: 
{\small
\[
    \mathsf{W}_j = \begin{cases} \mathsf{W}_i + \neg \, \Phi(j) & \ \text{if}\ \Phi(j){\downarrow}\, 
    \ \text{and}\ \opr_{\mathsf{W}_i}(\gnum{\Phi(j) \to \Phi(i)}) < \opr_{\mathsf{W}_i}(\gnum{\Phi(i) \to \Phi(j)})\ \text{holds}, \\ 
    \mathsf{W}_i + \Phi(j) & \ \text{if}\ \Phi(j){\downarrow}\, \ 
    \text{and}\ \opr_{\mathsf{W}_i}(\gnum{\Phi(i) \to \Phi(j)}) \leq \opr_{\mathsf{W}_i}(\gnum{\Phi(j) \to \Phi(i)})\ \text{holds}, \\ 
    \mathsf{W}_i & \ \text{otherwise.} \end{cases}
\]
}%
Suppose, towards a contradiction, that $\Phi(j){\downarrow}$ and we have
at least one of $\mathsf{W}_i \vdash \Phi(i) \to \Phi(j)$ and $\mathsf{W}_i \vdash \Phi(j) \to \Phi(i)$. 

If $\opr_{\mathsf{W}_i}(\gnum{\Phi(j) \to \Phi(i)}) < \opr_{\mathsf{W}_i}(\gnum{\Phi(i) \to \Phi(j)})$ holds, then 
$\mathsf{W}_j$ is the finite extension $\mathsf{W}_i + \neg\, \Phi(j)$ of $U$. 
Since $\mathsf{W}_j \vdash \neg \, \Phi(j)$, the condition $(\ast)$ implies that $\mathsf{W}_j$ is inconsistent. 
We have $\mathsf{W}_i \vdash \Phi(j)$. 
Since $\mathsf{W}_i \vdash \Phi(j) \to \Phi(i)$, we have $\mathsf{W}_i \vdash \Phi(i)$. 
This is a contradiction. 

If $\opr_{\mathsf{W}_i}(\gnum{\Phi(i) \to \Phi(j)}) \leq \opr_{\mathsf{W}_i}(\gnum{\Phi(j) \to \Phi(i)})$ holds, 
then $\mathsf{W}_j$ is the finite extension $\mathsf{W}_i + \Phi(j)$ of $U$. 
Since $\mathsf{W}_j \vdash \Phi(j)$, the condition $(\ast)$ implies $\mathsf{W}_i \vdash \neg \, \Phi(j)$. 
Since $\mathsf{W}_i \vdash \Phi(i) \to \Phi(j)$, we have $\mathsf{W}_i \vdash \neg\, \Phi(i)$, a contradiction. 

We have proved that $\mathsf{W}_j = \mathsf{W}_i$, $\mathsf{W}_i \nvdash \Phi(i) \to \Phi(j)$, and $\mathsf{W}_i \nvdash \Phi(j) \to \Phi(i)$.
\end{proof}

\noindent
We note that, in Theorem~\ref{if1}, we did not demand that the language of $U$ is arithmetical. Nor did we ask that
$U$ interprets {\sf R} or anything of the sort. We also note that Theorem~\ref{if1}
holds vacuously if $U$ would not be computably enumerable.

We obtain that there is no consistent theory that is effectively if-essentially incomplete via a witnessing function satisfying Conditional Extensionality. 

\begin{cor}\label{if_cor1}
    For any consistent c.e.~theory $U$, 
    there is no computable function $\Phi$ satisfying the following condition.
    For any natural numbers $i$ and $j$, if $\mathsf{W}_i$ and $\mathsf{W}_j$ are deductively equivalent consistent finite extensions of $U$, then
    $\Phi(i)$ and $\Phi(j)$ converge to sentences and we have:
    \begin{description}
        \item[Independence] $\Phi(i)$ is independent of $\mathsf{W}_i$, 
        \item[Conditional Extensionality]
        $\mathsf{W}_i \vdash \Phi(i) \leftrightarrow \Phi(j)$. 
    \end{description}
\end{cor}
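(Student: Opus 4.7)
The plan is to derive the corollary directly from Theorem~\ref{if1}, since the hypothesis of the corollary is already nearly in the form required by that theorem. Suppose, toward a contradiction, that $\Phi$ is a computable function witnessing the corollary's claim.

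First I would observe that $\Phi$ automatically satisfies condition~$(\ast)$ of Theorem~\ref{if1}. Indeed, setting $j := i$ in the hypothesis, whenever $\mathsf{W}_i$ is a consistent finite extension of $U$, the pair $(\mathsf{W}_i, \mathsf{W}_i)$ consists of deductively equivalent consistent finite extensions of $U$, so $\Phi(i)$ converges and $\Phi(i)$ is independent of $\mathsf{W}_i$ by the Independence clause. This is exactly $(\ast)$.

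Next I would fix a specific $i$ such that $\mathsf{W}_i$ is a consistent finite extension of $U$; for instance, since $U$ itself is c.e.~and consistent, we may take $i$ to be an index with $\mathsf{W}_i = U$. Applying Theorem~\ref{if1} to this $i$, we obtain a natural number $j$ such that $\mathsf{W}_j = \mathsf{W}_i$ (so in particular they are deductively equivalent consistent finite extensions of $U$), $\mathsf{W}_i \nvdash \Phi(i) \to \Phi(j)$, and $\mathsf{W}_i \nvdash \Phi(j) \to \Phi(i)$. In particular, $\mathsf{W}_i \nvdash \Phi(i) \leftrightarrow \Phi(j)$.

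Finally, this directly contradicts the Conditional Extensionality clause applied to this pair $(i, j)$, which demands $\mathsf{W}_i \vdash \Phi(i) \leftrightarrow \Phi(j)$. Since every step is an immediate unpacking of definitions, there is no substantive obstacle: all the combinatorial work has already been done in the proof of Theorem~\ref{if1} via the recursion-theoretic construction of the witnessing $j$. The corollary is essentially a restatement of that theorem in the symmetric, biconditional form natural to the Conditional Extensionality setting.
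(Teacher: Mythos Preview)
Your argument is correct and is exactly the intended derivation: the paper states this as an immediate corollary of Theorem~\ref{if1} without spelling out a proof, and your reduction (verify~$(\ast)$ via $j:=i$, then invoke Theorem~\ref{if1} at some index of a consistent finite extension of $U$ to produce a $j$ with $\mathsf{W}_j=\mathsf{W}_i$ violating Conditional Extensionality) is precisely the unpacking the reader is meant to supply.
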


We say that a theory $U$ is \textit{effectively half-if-essentially incomplete} iff there exists a partial computable function $\Phi$ such that for any natural number $i$, if $\mathsf{W}_i$ is a consistent finite extension of $U$, then $\Phi(i)$ converges to a sentence, $U \nvdash \Phi(i)$, and $\mathsf{W}_i \nvdash \neg\, \Phi(i)$. 
In our paper \cite{kura:cert23}, we discussed that this notion is closely related to the creativity of $U$. 
The proof of the following theorem is similar to that of Theorem \ref{if1}, so we omit it.

\begin{theorem}\label{if2}
    Let $U$ be any c.e.~theory. 
    Suppose that a computable function $\Phi$ satisfies the following condition: 
    \begin{itemize}
        \item [$(\dagger)$] For any natural number $i$, if $\mathsf{W}_i$ is a consistent finite extension of $U$, then $\Phi(i)$ converges to a sentence, $U \nvdash \Phi(i)$, and $\mathsf{W}_i \nvdash \neg \, \Phi(i)$. 
    \end{itemize}
    Then, for any index $i$ of $U$, there exists an index $j$ of $U$ such that $U \nvdash \Phi(j) \to \Phi(i)$. 
\end{theorem}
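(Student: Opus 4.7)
The plan is to adapt the Recursion-Theorem diagonalization from Theorem \ref{if1}. Since here we only need to block a single implication $\Phi(j) \to \Phi(i)$ rather than both directions simultaneously, witness comparison is unnecessary and a single-branch diagonal suffices.

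Fix $i$ with $\mathsf{W}_i = U$; we may assume $U$ is consistent, since otherwise the conclusion is vacuous. Using the Recursion Theorem, I would produce an index $j$ with
\[
\mathsf{W}_j = \begin{cases} U + \neg\, \Phi(j) & \text{if}\ \Phi(j){\downarrow}\ \text{and}\ U \vdash \Phi(j) \to \Phi(i), \\ U & \text{otherwise.} \end{cases}
\]
This is a legitimate c.e.~axiomatization: enumerate the axioms of $U$ in stages, and in parallel search for the convergence of $\Phi(j)$ and for a $U$-proof of $\Phi(j) \to \Phi(i)$; only once both are witnessed does the machine also start outputting $\neg\, \Phi(j)$.

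I would then verify the conclusion by case analysis. If $\Phi(j)$ diverged, then $\mathsf{W}_j = U$ would be a consistent finite extension of $U$ and $(\dagger)$ applied to $j$ would force $\Phi(j){\downarrow}$, a contradiction; hence $\Phi(j)$ converges. If, moreover, $U \vdash \Phi(j) \to \Phi(i)$, then $\mathsf{W}_j = U + \neg\, \Phi(j)$. This theory is consistent, for its inconsistency would yield $U \vdash \Phi(j)$ and hence $U \vdash \Phi(i)$, contradicting $U \nvdash \Phi(i)$ from $(\dagger)$ applied to $i$. But then $\mathsf{W}_j$ is a consistent finite extension of $U$ that proves $\neg\, \Phi(j)$, against $(\dagger)$ applied to $j$. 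Both cases contradict $U \vdash \Phi(j) \to \Phi(i)$, so we must have $U \nvdash \Phi(j) \to \Phi(i)$, which means the "otherwise" branch obtains and $\mathsf{W}_j = U$. Hence $j$ is the required index of $U$.

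The only genuine obstacle is setting up the self-referential definition of $j$, whose triggering condition refers to $\Phi(j)$ itself; once the Recursion Theorem supplies such a $j$, the verification is a clean double application of $(\dagger)$, entirely mirroring the structure of Theorem \ref{if1}.
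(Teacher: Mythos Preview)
Your argument is correct and is precisely the adaptation the paper has in mind: the paper does not write out a proof of Theorem~\ref{if2} at all, but simply states that it is similar to that of Theorem~\ref{if1}, and your single-branch Recursion-Theorem diagonalization is exactly the natural simplification once only the direction $\Phi(j)\to\Phi(i)$ needs to be blocked. One small remark: your claim that the inconsistent case is ``vacuous'' is a bit loose (if $U$ is inconsistent then $U$ proves every sentence, so the conclusion would only hold in the degenerate reading where $\Phi(i)$ or $\Phi(j)$ fails to converge), but this edge case is irrelevant for the intended application in Corollary~\ref{if_cor2}, where $U$ is assumed consistent.
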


The following corollary is a strengthening of Corollary \ref{if_cor1} stating that there is no consistent c.e.~theory that is effectively half-if-essentially incomplete via a witnessing function satisfying Conditional Extensionality. 

\begin{cor}\label{if_cor2}
    For any consistent c.e.~theory $U$, 
    there is no computable function $\Phi$ satisfying the following condition.
    For any natural numbers $i$ and $j$, if $\mathsf{W}_i$ and $\mathsf{W}_j$ are deductively equivalent consistent finite extensions of $U$, then
    $\Phi(i)$ and $\Phi(j)$ converge to sentences and we have:
    \begin{description}
        \item[Weak Independence] $U \nvdash \Phi(i)$ and $\mathsf{W}_i \nvdash \neg\, \Phi(i)$, 
        \item[Conditional Extensionality]
        $\mathsf{W}_i \vdash \Phi(i) \leftrightarrow \Phi(j)$. 
    \end{description}
\end{cor}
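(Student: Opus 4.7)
The plan is to derive this corollary as a direct consequence of Theorem~\ref{if2}, whose heavy lifting (a recursion-theoretic fixed-point argument matching witness-comparison) already carries all the work. So I would argue by contradiction: suppose some computable $\Phi$ satisfies both Weak Independence and Conditional Extensionality for all pairs $(i,j)$ of indices of deductively equivalent consistent finite extensions of $U$.

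First I would verify that $\Phi$ meets the hypothesis $(\dagger)$ of Theorem~\ref{if2}. Given any $i$ such that $\mathsf{W}_i$ is a consistent finite extension of $U$, the pair $(i,i)$ trivially witnesses that $\mathsf{W}_i$ and $\mathsf{W}_i$ are deductively equivalent consistent finite extensions of $U$; by the corollary's hypothesis, $\Phi(i)$ converges to a sentence, and Weak Independence gives $U \nvdash \Phi(i)$ and $\mathsf{W}_i \nvdash \neg\, \Phi(i)$, which is exactly $(\dagger)$.

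Next, I would pick an index $i_0$ of $U$ itself, which exists because $U$ is c.e. Since $U$ is consistent, $\mathsf{W}_{i_0} = U$ is a consistent finite extension of $U$. Theorem~\ref{if2} therefore supplies an index $j$ of $U$ such that $U \nvdash \Phi(j) \to \Phi(i_0)$. But $\mathsf{W}_{i_0}$ and $\mathsf{W}_j$ are both deductively equal to $U$, hence are deductively equivalent consistent finite extensions of $U$. So Conditional Extensionality yields $\mathsf{W}_{i_0} \vdash \Phi(i_0) \leftrightarrow \Phi(j)$, i.e., $U \vdash \Phi(j) \to \Phi(i_0)$, contradicting the previous line.

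I do not anticipate a genuine obstacle: the only subtlety is to check that the hypotheses of the corollary, restricted to the diagonal pair $(i,i)$, entail $(\dagger)$, and that two indices of $U$ itself count as indices of deductively equivalent consistent finite extensions of $U$. Both are immediate from the definitions once spelled out.
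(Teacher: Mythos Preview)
Your proposal is correct and is precisely the derivation the paper intends: the corollary is stated immediately after Theorem~\ref{if2} as its consequence, with the proof left implicit. Your argument---verifying $(\dagger)$ via the diagonal pair $(i,i)$, then applying Theorem~\ref{if2} to an index $i_0$ of $U$ itself to obtain $j$ with $U\nvdash\Phi(j)\to\Phi(i_0)$, and finally contradicting this via Conditional Extensionality---spells out exactly that implicit step.
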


\appendix
\section{Failure of Monotonicity Revisited}\label{gastsmurf}

We revisit Hamkins' argument that independence and monotonicity cannot be combined.
We work over any consistent theory $U$. The axiom set of the theory may have any complexity.
We need not ask anything of the theory, not even that it contains numerals.
Suppose we have a mapping $\phi \mapsto \rho_\phi$ from $U$-sentences to $U$-sentences. There are no assumptions on the
complexity of the mapping or on the complexity of the values and the like. 
We write $\vdash$ for $U$-provability.
Suppose we have \emph{Independence:} if $U+\phi$ is consistent, then
$\nvdash \phi \to \rho_\phi$ and $\nvdash \phi \to \neg\, \rho_\phi$ and that we have
\emph{Monotonicity:} if $\vdash \phi \to \psi$, then $\vdash \rho_\phi \to \rho_\psi$.

We have $\vdash \neg\, \rho_\top \to \top$. Hence, $\vdash \rho_{\neg \,\rho_\top} \to \rho_\top$.
Ergo,  $\vdash  \neg\, \rho_\top \to \neg\,  \rho_{\neg \,\rho_\top} $. But, then it would follow by Independence that 
$\vdash \rho_\top$. Again by Independence, this is impossible.

Now let $U$ be a base theory as in Section~\ref{prinsbsmurf}.
When we take  $\rho_\phi := \aco_\phi \top$, since we have Independence, we find:
\[\nvdash  \neg\, \aco_\top\top \to \neg\,  \aco_{\neg \,\aco_\top\top}\top.\]
Rewriting this, we obtain: $\nvdash   \apr_\top\bot \to   \apr_{\apr_\top\bot}\bot$.
On the other hand, we do have: $\vdash   \apr_\top\bot \iff   \apr_\top \neg\, \apr_\top\bot$
(by the formalised Second Incompleteness Theorem).
So, $\apr_\top \bot \nvdash   \apr_\top \neg\, \apr_\top\bot \to   \apr_{\apr_\top\bot}\bot$. So, we have a counterexample
 to \[\phi \wedge \psi\vdash \apr_\phi (\psi \to \chi) \to \apr_{\phi\wedge \psi} \chi,\] with $\phi := \top$, $\psi := \apr_\top\bot$ and
 $\chi := \bot$.
 
 In contrast, we have:
 
\begin{theorem}
$U+ (\phi\wedge\psi)\vdash \apr_{\phi\wedge \psi} \chi \to \apr_\phi (\psi \to \chi)$.
\end{theorem}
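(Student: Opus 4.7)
The plan is to reason inside $U + (\phi \wedge \psi)$ and transform a witness for the hypothesis $\apr_{\phi\wedge\psi}\chi$ into a witness for the conclusion $\apr_\phi(\psi\to\chi)$ by a standard deduction-theorem manoeuvre combined with a smallness-transfer step.

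Unpacking the hypothesis, we obtain an $x$ such that $\opr_{\phi\wedge\psi,x}\chi$ and $\mf S_{\phi\wedge\psi}(x)$ both hold. The first conjunct yields a proof $p$ of $\chi$ from $(U + \phi\wedge\psi)$-axioms of code $\leq x$. From $p$, we construct a proof $p'$ of $\psi\to\chi$ whose axioms are the $U$-axioms used in $p$, together with $\phi$ used in place of $\phi\wedge\psi$: whenever $p$ appeals to the axiom $\phi\wedge\psi$, instead combine $\phi$ with the hypothesis $\psi$ to derive $\phi\wedge\psi$, and then discharge $\psi$ at the end of the proof. Under our monotone coding convention, $\gnum{\phi} < \gnum{\phi\wedge\psi} \leq x$, so every axiom of $p'$ has code $\leq x$. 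Hence $\opr_{\phi,x}(\psi\to\chi)$.

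For the smallness transfer, observe that $U\vdash (\phi\wedge\psi)\to \phi$. By Theorem~\ref{oppervlakkigesmurf}(1), $U + (\phi\wedge\psi)$ proves $\mf S_{\phi\wedge\psi}(x)\to \mf S_\phi(x)$; so $\mf S_\phi(x)$ holds. Combining this with the previous step gives $\apr_\phi(\psi\to\chi)$, as required.

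The main obstacle, to the extent there is one, is ensuring that the deduction-theorem transformation formally preserves the axiom-code bound $x$ rather than merely preserving provability. This comes down to exactly the coding assumption stipulated at the start of Section~\ref{csorf} (strict subformulas have smaller codes), which forces $\gnum{\phi}\leq x$ whenever $\gnum{\phi\wedge\psi}\leq x$, so that $\phi$ is automatically a legitimate axiom within the same bound. The rest is a routine {\sf EA}-formalizable manipulation of proof codes.
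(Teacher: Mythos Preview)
Your proof is correct and follows essentially the same route as the paper's: transform the proof of $\chi$ from $\phi\wedge\psi$ and small $U$-axioms into a proof of $\psi\to\chi$ from $\phi$ and the same $U$-axioms, then invoke Theorem~\ref{oppervlakkigesmurf}(1) to convert $(\phi\wedge\psi)$-smallness into $\phi$-smallness. You are in fact more explicit than the paper about why the axiom bound $x$ is preserved (via the subformula-monotonicity of coding), which is a welcome clarification; the paper leaves this point implicit.
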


\begin{proof}
We reason in $U$ plus $\phi\wedge \psi$. Suppose $p$ is a proof of $\chi$ from
$\phi \wedge \psi$ and $(\phi\wedge\psi)$-small axioms. Then,
we can find a proof $q$ of $(\psi\to \chi)$ from $\phi$ plus $(\phi\wedge \psi)$-small axioms.
By Theorem~\ref{oppervlakkigesmurf}, we find that  $(\phi\wedge \psi)$-small axioms are also
$\phi$-small. So, $q$ witnesses $\apr_\phi (\psi \to \chi)$. 
\end{proof}

\section{Arrow Notation}\label{robinhoodsmurf}
It is attractive to represent $\apr_\phi\psi$, as introduced in Section~\ref{sluwesmurf},  as a sort of implication $\phi \tto \psi$.
This is not entirely comfortable since we do not have
the transitivity of implication. Anyway, to see how the alternative notation looks,
we provide the principles we derived before and some new ones ---without any
claim of completeness. 

We will assume that we always have the conjunction of a finite axiomatisation of 
${\sf EA}+\mathrm B\Sigma_1$, say $\alpha$, as an axiom, whether we have $\phi$ or not, as in 
Remark~\ref{hacksmurf}, so that we have  (\dag) $\Sigma^0_1$-completeness unconditionally
and  that our provability predicate $\apr_\phi$ is equivalent to a Fefermanian $\Sigma^0_1$-predicate.

\begin{tabular}{lll}
\grullet & 
If $\phi \vdash \psi$, then $\phi \vdash \phi \tto \psi$. & {\footnotesize Theorem~\ref{loebsmurf}.} \\
\grullet & $\vdash ((\phi \tto \psi) \wedge (\phi \tto (\psi \to \chi))) \to (\phi \tto \chi)$. & {\footnotesize Theorem~\ref{loebsmurf}.}\\
\grullet &
$\phi \tto \psi \vdash \opr(\phi \tto \psi)$. & {\footnotesize $\phi \tto \psi$ is $\Sigma_1$.}\\
\grullet &
$\opr \psi \vdash \phi \tto \opr\psi$. & {\footnotesize $\opr\psi$ is $\Sigma_1$ and (\dag).}\\
\grullet &
$(\psi \tto \chi) \vdash \phi \tto (\psi \tto \chi)$. & {\footnotesize $\phi \tto \psi$ is $\Sigma_1$ and (\dag).}\\
\grullet &
$\ \opr(\phi \to \psi) \vdash \opr(\phi \to (\phi\tto \psi))$. & {\footnotesize Theorem~\ref{emission}.} \\
\grullet &
$ \opr(\phi \to (\phi \tto \psi)) \vdash \opr(\phi \to \psi)$. & {\footnotesize Theorem~\ref{absorption}.} \\
\grullet &
$(\phi\wedge \psi),\, ((\phi\wedge \psi) \tto \chi) \vdash \phi \tto (\psi \to \chi)$. & 
{\footnotesize Theorem~\ref{oppervlakkigesmurf} and the definition of $\apr$.}\\
\grullet &
$\phi \tto ((\phi \tto \psi) \to \psi) \vdash  \phi \tto \psi$. &  {\footnotesize Theorem \ref{loebsmurf}.}
\end{tabular}

\section{On the Proof of Absorption}\label{emisabso}
The fixed point $\nu$ with $\eac \vdash \nu \iff  (\exists x \, \opr_{\phi,x} \psi ) < \opr_\phi \nu$
 is an instance of the fixed point used in the FGH Theorem, so called after Harvey Friedman, Warren Goldfarb and
 Leo Harrington who each, independently, discovered the argument associated with the fixed point.
 However, in fact, John Shepherdson discovered the argument first. See \cite{shep:rep61}.
 
 The second half of the proof of Theorem~\ref{absorption} is simply a proof of a specific version of the
 FGH theorem. We cannot quite follow the usual argument, since the opposite of $\nu$, to wit
 $  \opr_\phi \nu \leq (\exists x \, \opr_{\phi,x} \psi)$ is not $\Sigma^0_1$. 

The first half of the proof of Theorem~\ref{absorption} brings us from $\opr_\phi \apr_\phi \psi$ to $\opr_\phi \nu$.
The crucial point in the paper is the step where $\phi$-smallness is used is in moving from $\opr_{\phi,(x)}\nu$ to $\nu$.

What about using the fixed point $\apr^\ast_\phi \psi$ with
\[\eac\vdash \apr^\ast_\phi \psi \iff (\exists x \, \opr_{\phi,x} \psi ) < \opr_\phi\apr^\ast_\phi \bot \;?\]
That would deliver a Fefermanian provability predicate with absorption for the case that $\psi := \bot$.
We can manipulate this by modifying the definition using $\gopr\,$-trickery in order to get closer to extensionality. However, we do not
 see how to get full extensionality.

\begin{thebibliography}{FRW13}

\bibitem[Fef60]{fefe:arit60}
Solomon Feferman.
\newblock Arithmetization of metamathematics in a general setting.
\newblock {\em Fundamenta Mathematic\ae}, 49:35--92, 1960.

\bibitem[FRW13]{frie:slow13}
Sy~D. Friedman, Michael Rathjen, and Andreas Weiermann.
\newblock Slow consistency.
\newblock {\em Annals of Pure and Applied Logic}, 164(3):382--393, 2013.

\bibitem[Gua79]{guas:part79}
 David Guaspari.
 \newblock Partially conservative extensions of arithmetic.
 \newblock {\em Transactions of the American Mathematical Society}, 
 254:47--68, 1979.
 
\bibitem[GS79]{guas:ross79}
David Guaspari and Robert M. Solovay.
\newblock Rosser sentences. 
\newblock {\em Annals of Mathematical Logic},
16(1):81--99, 1979.

\bibitem[HP93]{haje:meta91}
H{\'a}jek, Petr and Pudl{\'a}k, Pavel.
\newblock {\em {Metamathematics of First-Order Arithmetic}}, {\em Perspectives in Mathematical Logic}.
\newblock Springer, Berlin, 1993.

\bibitem[Ham25]{hamk:nonl22}
Joel~David Hamkins.
\newblock Nonlinearity and illfoundedness in the hierarchy of large cardinal
  consistency strength.
\newblock {\em Monatshefte f\"ur Mathematik}, published online 2025.
doi:10.1007/s00605-025-02082-1.

\bibitem[Jon70]{jone:effe70}
Jones, J.P.
\newblock Effectively retractable theories and degrees of undecidability.
\newblock {\em The Journal of Symbolic Logic},
34(4): 597--604, 1970.


\bibitem[Kra95]{kraj:boun95}
Jan Kraj{\'\i}{\v{c}}ek.
\newblock {\em Bounded Arithmetic, Propositional Logic, and Complexity Theory},
  volume~60 of {\em Encyclopedia of Mathematics and its Applications}.
\newblock Cambridge University Press, Cambridge, UK; New York, NY, USA, 1995.


\bibitem[KK25]{kogu:doubly25}
Haruka Kogure and Taishi Kurahashi.
\newblock Doubly partially conservative sentences.
\newblock arXiv:2503.12373. 2025.

\bibitem[Kre62]{kreis:weak62}
George Kreisel.
\newblock On weak completeness of intuitionistic predicate logic.
\newblock {\em The Journal of Symbolic Logic}, 27(2):139--158, 1962.

\bibitem[Kri62]{krip:flex62}
Saul~A. Kripke.
\newblock ``flexible'' predicates of formal number theory.
\newblock {\em Proceedings of the American Mathematical Society},
  13(4):647--650, 1962.


\bibitem[KV24a]{viss:pour24} 
Taishi Kurahashi and Albert Visser.
\newblock Pour-{E}l’s Landscape. 
\newblock {\em The Bulletin of Symbolic Logic},
   30(3):362–-397, 2024. 


\bibitem[KV24b]{kura:cert23}
Taishi Kurahashi and Albert Visser.
\newblock Certified $\Sigma_1$-sentences.   
\newblock {\em The Journal of Symbolic Logic}, published online 2024.
doi: 10.1017/jsl.2024.84.

\bibitem[GH03]{giva:intro09}
Steven Givant and Paul Halmos.
\newblock {\em {Introduction to Boolean algebras}}, {\em Undergraduate
Texts in Mathematics}.
\newblock Springer, New York, 2009.


\bibitem[Lin03]{lind:aspe03}
Per Lindstr{\"o}m.
\newblock {\em {Aspects of Incompleteness}}, volume~10 of {\em Lecture Notes in
  Logic}.
\newblock ASL/A.K. Peters, Natick, Massachusetts, 2003.

\bibitem[MW19]{Mont:inev19}
Antonio Montalb\'an and James Walsh. 
\newblock On the inevitability of the consistency operator.
\newblock {\em Journal of Logic and Computation}, 34(6):1159--1171, 2024.

\bibitem[MPV24]{murw:noeu24}
 Juvenal Murwanashyaka, Fedor Pakhomov, and Albert Visser. 
\newblock There are no minimal essentially undecidable theories.
\newblock {\em The Journal of Symbolic Logic}, 84(1):205--225, 2019.


\bibitem[Pou68]{pour:effe68}
 Marian Boykan Pour-El.
\newblock Effectively extensible theories.
\newblock {\em The Journal of Symbolic Logic},
 33(1), 56--68, 1968.

 \bibitem[BPEK67]{pour:dedu67}
Marian Boykan Pour-El and Saul A.~Kripke.
\newblock Deduction-preserving ``{R}ecursive {I}somorphisms'' between theories.
\newblock {\em Fundamenta Mathematic{\ae}}, 61:141--163, 1967.


\bibitem[Sha94]{shav:smar94}
Vladimir~Yu. Shavrukov.
\newblock A smart child of {P}eano's.
\newblock {\em Notre Dame Journal of Formal Logic}, 35:161--185, 1994.

\bibitem[She61]{shep:rep61}
John~C. Shepherdson.
\newblock Representability of recursively enumerable sets in formal theories.
\newblock {\em Archiv f{\"u}r mathematische Logik und Grundlagenforschung},
  5(3-4):119--127, 1961.

\bibitem[Smo89]{smor:self89}
Craig Smory\'{n}ski.
\newblock {Arithmetic analogues of {M}c{A}loon's unique {R}osser sentences}.
\newblock {\em Archive for Mathematical logic}, 28:1--21, 1989.

\bibitem[SV14]{shav:unif14}
Vladimir~Yu. Shavrukov and Albert Visser.
\newblock Uniform density in {L}indenbaum algebras.
\newblock {\em Notre Dame Journal of Formal Logic}, 55(4):569--582, 2014.

\bibitem[VV94]{verb:small94}
Rineke Verbrugge and Albert Visser.
\newblock A small reflection principle for bounded arithmetic.
\newblock {\em The Journal of Symbolic Logic}, 59:785--812, 1994.

\bibitem[Vis89]{viss:pean89}
Albert Visser.
\newblock Peano's smart children: A provability logical study of systems with
  built-in consistency.
\newblock {\em Notre Dame Journal of Formal Logic}, 30(2):161--196, 1989.


\bibitem[Vis11]{viss:seco11}
Albert Visser.
\newblock Can we make the {S}econd {I}ncompleteness {T}heorem coordinate free?
\newblock {\em Journal of Logic and Computation}, 21(4):543--560, 2011.

\bibitem[Vis21]{viss:abso21}
Albert Visser.
\newblock The absorption law. {O}r: how to {K}reisel a
  {H}ilbert-{B}ernays-{L}{\"o}b.
\newblock {\em Archive for Mathematical Logic}, 60(3-4):441--468, 2021.

\bibitem[Vis25]{viss:numb25}
Albert Visser.
\newblock From numbers to ur-strings.
\newblock {\em arXiv:2506.01028}, 2025.
\newblock Version 3.

\end{thebibliography}
\end{document}